\theoremstyle{plain}
\newtheorem{theorem}{Theorem}[section]
\newtheorem{lemma}[theorem]{Lemma}
\newtheorem{prop}[theorem]{Proposition}
\newtheorem{cor}[theorem]{Corollary}
\newtheorem{utheorem}{\textrm{\textbf{Theorem}}}
\newcommand{\IR}{\mathbb{R}}
\theoremstyle{definition}
\newtheorem{defn}[theorem]{Definition}
\newtheorem{notation}[theorem]{Notation}
\newtheorem{rem}[theorem]{Remark}
\newtheorem{example}[theorem]{Example}
\numberwithin{equation}{section}
\DeclareMathOperator{\diag}{diag}
\def\cS{\mathcal S}
\def\cD{\mathcal D}
\def\cU{\mathcal U}
\def\cL{\mathcal L}
\begin{document}
\title[Semigroup automorphisms of total positivity]{Semigroup automorphisms of total positivity}

\author{Projesh Nath Choudhury}
\address[P.N.~Choudhury]{Department of Mathematics, Indian Institute of Technology Gandhinagar, Palaj, Gandhinagar
	Gujarat 382355, India}
\email{\tt projeshnc@iitgn.ac.in}

\author{Shaun Fallat}
\address[S. Fallat]{University of Regina, Regina, SK, Canada}
\email{\tt shaun.fallat@uregina.ca}

\author{Chi-Kwong Li}
\address[C.K. Li]{Department of Mathematics, College of William and Mary, Williamsburg, VA 23187-8793, USA}
\email{\tt cklixx@wm.edu}

\date{\today}

\begin{abstract}
Totally positive (TP) and totally nonnegative (TN) matrices connect to analysis, mechanics, and to dual canonical bases in reductive groups, by well-known works of Schoenberg, Gantmacher--Krein, Lusztig, and others. TP matrices form a multiplicatively closed semigroup, contained in the larger monoid of invertible totally nonnegative (ITN) matrices. Whitney and Berenstein--Fomin--Zelevinsky found bidiagonal factorizations  of all  $n\times n$ ITN and TP matrices into multiplicative generators; a natural question now is to classify the multiplicative automorphisms of these semigroups.	In this article, we classify all automorphisms of these semigroups of ITN and TP matrices. In particular, we show that the automorphisms are the same, and they respect the multiplicative generators.	

\end{abstract}

\subjclass[2020]{15B48, 20M15 (primary), 15A23, 15A15, 47D03 (secondary)}

\keywords{Semigroup automorphisms, total positivity, totally nonnegative matrix, bidiagonal factorizations}

\maketitle

\section{Introduction and main results}

A real matrix $A$ is totally positive (TP) if all its minors are positive, and  totally nonnegative (TN) if all minors of $A$ are nonnegative. Total positivity is an evergreen area of mathematics with a rich history.  The pioneering works by Gantmacher--Krein \cite{gantmacher-krein} and Schoenberg \cite{S55}, the early monographs \cite{GK50, K68}, and the more recent works \cite{FZ02,fallat-john,GM96,pinkus} all provide extensive references to the intriguing history of total positivity, as well as tales of its many unforeseen applications in several branches of mathematics. These includes analysis, approximation theory, cluster algebras, combinatorics, differential equations, Gabor analysis, integrable systems, matrix theory, probability and statistics, and
representation theory
\cite{BFZ96,Bre95,fallat-john,FZ02,GRS18,K68,KW14,Lu94,pinkus,Ri03,Sch07}.

The notion of total positivity was first studied by Fekete--P\'olya \cite{FP12} in 1912 in terms of the  one sided P\'olya frequency sequences and their variation diminishing property -- which may be regarded as originating in the famous 1883 memoir of Laguerre \cite{Laguerre}. In the same paper, Fekete showed that TP matrices are characterized by the positivity of the contiguous minors which was later extended in 1955 to totally positivity of order $k$ by Schoenberg \cite{S55}. Subsequently, in 1937 Gantmacher--Krein \cite{gantmacher-krein} characterized TP (or TN)  matrices in terms of  the positivity (or non-negativity) of the spectra of all submatrices. In 1941, Gantmacher--Krein \cite{GK50} provided a fundamental characterization of total positivity in terms of variation diminution and very recently Choudhury \cite{C22} refined these results by establishing a single-vector test.

Given an integer $n \geq 1$, the Cauchy--Binet formula implies that the set of all invertible TN (ITN) matrices and the subset of all TP matrices are closed under multiplication. Denoting these by $ITN(n) \supsetneq TP(n)$ respectively, we see that $TP(n)$ is a semigroup of $GL_n(\mathbb{R})$ and that $ITN(n)$ is a submonoid of $GL_n(\mathbb{R})$. The following classical result of Cryer \cite{C1, C2}  shows that the study of $ITN(n)$ can be reduced to the investigation of its subsemigroup of upper triangular ITN matrices with one on the diagonal and subgroup of diagonal ITN matrices:

\begin{lemma}
	A matrix $A\in GL_n(\mathbb{R})$ is totally nonnegative if and only if $A$ has Gaussian decomposition $A=LDU$, where $L,U$ are lower and upper triangular ITN matrices with ones on the main diagonal and $D$ is a diagonal ITN matrix.
\end{lemma}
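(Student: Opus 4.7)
The ``if'' direction is immediate from the Cauchy--Binet formula: a diagonal matrix with nonnegative entries is TN, $L$ and $U$ are TN by hypothesis, and the Cauchy--Binet formula shows the product of TN matrices is TN; so $A = LDU$ is TN, and $A$ is invertible because $L$ and $U$ are unit triangular and the diagonal of $D$ is strictly positive (as $D$ is an invertible nonnegative diagonal matrix).

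For the ``only if'' direction, suppose $A \in GL_n(\mathbb{R})$ is TN. The first step is to show that every leading principal minor $\det A_k$, where $A_k = A[\{1,\dots,k\}\mid\{1,\dots,k\}]$, is strictly positive. Nonnegativity is immediate from TN-ness; strict positivity can be obtained either by approximating $A$ by TP matrices (using density of $TP(n)$ in the identity component of $ITN(n)$ together with closedness of the TN cone and continuity of minors), or by a Sylvester-style identity showing that $\det A_k = 0$ for some $k < n$ would force $\det A = 0$, contradicting invertibility. Once this is established, the classical Gaussian elimination procedure produces $A = LDU$ with $L$ unit lower triangular, $U$ unit upper triangular, and $D = \diag(d_1,\dots,d_n)$ where $d_k = \det A_k / \det A_{k-1} > 0$; in particular $D$ is ITN.

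The remaining task is to show that $L$ and $U$ are themselves TN. By transposing (which interchanges the roles of $L$ and $U$ and preserves the TN property) it suffices to handle $L$. The plan is to use the classical identity
\[
\det L[I \mid \{1,\dots,k\}] \; = \; \frac{\det A[I \mid \{1,\dots,k\}]}{\det A_k}
\qquad (I = \{i_1 < \cdots < i_k\}),
\]
which exhibits every minor of $L$ with a prefix column set as the ratio of a nonnegative minor of $A$ by a strictly positive leading principal minor, hence nonnegative. A general minor $\det L[I \mid J]$ of the unit lower triangular matrix $L$ is zero unless $i_s \ge j_s$ for all $s$; for the nonzero ones, I would argue by induction on $n$ invoking the classical Schur-complement theorem for TN matrices (Karlin): when $\det A_k > 0$, the Schur complement $A/A_k$ is again TN, so peeling off leading principal blocks one at a time and combining the factors furnished by the induction hypothesis with the data $a_{11}, A[2{:}n,1], A[1,2{:}n]$ yields ITN matrices $L$ and $U$.

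The main obstacle is the last step. A unit-lower-triangular matrix with nonnegative entries need not be TN, so the nonnegativity of arbitrary minors of $L$ genuinely requires information beyond the signs of its entries: either the minor-ratio identity above extended to all minors (via Schur-type determinantal identities expressing $\det L[I\mid J]$ as a nonnegative rational combination of minors of $A$), or the Karlin Schur-complement theorem combined with induction on $n$. This is the one place where a nontrivial classical input is really used; the rest of the argument is bookkeeping assembled around it.
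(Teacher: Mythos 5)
The paper does not prove this lemma; it is quoted as a classical result of Cryer \cite{C1,C2}, so there is no in-paper proof to compare against. Your outline is broadly sensible up through the construction of the $LDU$ factorization: the ``if'' direction via Cauchy--Binet is correct, positivity of leading principal minors for a nonsingular TN matrix is genuine and in fact appears in the paper as Karlin's theorem, and the minor-ratio identity $\det L[I\mid\{1,\dots,k\}] = \det A[I\mid\{1,\dots,k\}]/\det A_k$ is valid (by Cauchy--Binet applied to $A=L(DU)$, since $DU$ is upper triangular the sum collapses to $\gamma=\{1,\dots,k\}$).

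The gap is in the final step, and it is not just a matter of polish. Nonnegativity of the prefix-column minors of $L$ does \emph{not} obviously suffice to show $L$ is TN, and the Schur-complement induction you propose does not close as described. The natural inductive assembly would produce
\[
L \;=\; \begin{pmatrix} 1 & 0 \\ v & L' \end{pmatrix}, \qquad v = A[2{:}n,1]/a_{11}, \quad L' = \text{the $L$-factor of the Schur complement } A/a_{11},
\]
with $v\ge 0$ and $L'$ TN by induction. But these two facts alone do \emph{not} imply $L$ is TN: take $v=(1,2)^\top\ge 0$ and $L' = \bigl(\begin{smallmatrix}1&0\\1&1\end{smallmatrix}\bigr)$ (which is TN); then the assembled matrix $\bigl(\begin{smallmatrix}1&0&0\\1&1&0\\2&1&1\end{smallmatrix}\bigr)$ has $\det L[\{2,3\}\mid\{1,2\}] = -1 < 0$. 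So the inductive hypothesis is not enough on its own -- one must use that $v$ and $L'$ are coupled through the minors of the same TN matrix $A$, and showing that requires exactly the ``Schur-type determinantal identities'' you name but do not state. Concretely, one needs Cryer's identity expressing a \emph{general} minor $\det L[\alpha\mid\beta]$ with $\beta_{\max}=m$ as $\det A[\alpha\cup(\{1,\dots,m\}\setminus\beta)\mid\{1,\dots,m\}]/\det A_m$ (together with a separate reduction when $\alpha$ meets $\{1,\dots,m\}\setminus\beta$); this is the real content of the lemma and your sketch never actually produces it.

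Within the paper's own toolkit there is a cleaner route you could have taken: Whitney's bidiagonal factorization, stated right after the lemma, writes $A$ as a product of lower elementary bidiagonal factors, then a diagonal, then upper factors. Grouping the lower factors, the diagonal, and the upper factors gives $A=LDU$ with $L$ and $U$ each a product of TN matrices (hence TN by Cauchy--Binet) and unit triangular, and $D\in\cD(n)$; uniqueness of the $LDU$ decomposition then identifies these with the Gaussian factors. That bypasses the minor-bookkeeping entirely, at the cost of taking Whitney's theorem as the input rather than proving Cryer's result independently.
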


In 1952, Whitney \cite{Whitney} (a student of Schoenberg)  identified the infinitesimal generators of $ITN(n)$ and $TP(n)$ as the Chevalley generators of the corresponding Lie algebra. Whitney showed that  each ITN and TP matrix can be factorized using elementary bidiagonal matrices.  An $n\times n$ matrix of the form $I+w E_{ij}$ with $w \geq 0$, $I$ the identity matrix, and $E_{ij}$ an elementary matrix, is called an elementary bidiagonal matrix if $|i-j| = 1$. Elementary bidiagonal matrices are invertible totally nonnegative. We owe to Whitney the following factorization of invertible TN matrices.

\begin{theorem}[Whitney \cite{Whitney}]
	Every $n \times n$ ITN matrix $A$ is of the form
	
	%	\begin{align}\label{TN-classification-eqn}
		%		A=\prod_{j=1}^{n-1} \prod_{k=n-1}^{j} \big{(} I+w_{j,k}\El_{k+1,k} \big{)}~D \prod_{j=n-1}^{1} \prod_{k=n-1}^{j} \big{(} I+ w'_{j,k+1}\El_{k,k+1} \big{)}
		%	\end{align}
	\begin{equation}\label{bifact}
		A:=A({\bf w, w^{'}, d})=\prod_{j=1}^{n-1} \prod_{k=n-1}^{j} \big{(} I+w_{j,k}E_{k+1,k} \big{)}~D \prod_{j=n-1}^{1} \prod_{k=n-1}^{j} \big{(} I+ w'_{j,k+1}E_{k,k+1} \big{)},
	\end{equation}
	where  ${\bf w}=(w_{j,k})_{1 \leq j \leq k \leq n-1} \in [0,\infty)^{ n \choose 2},~ {\bf w'}=(w'_{j,k+1})_{1\leq j\leq k\leq n-1} \in [0,\infty)^{ n \choose 2}$; ${\bf d } \in \mathbb  (0,\infty)^n$ and $D=\diag({\bf d })$, and $D$ can be made to appear at either end of the factorization or between any of the elementary bidiagonal factors, and the factors are multiplied in the given order of $j$ and $k.$  Equivalently, the factorization can be arranged such that the product of upper bidiagonal factors appears on the left, while the product of lower bidiagonal factors appears on the right.
\end{theorem}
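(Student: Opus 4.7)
The plan is to reduce the problem to one about unit triangular ITN factors via Cryer's lemma, to establish the desired factorization separately for the lower triangular factor (the upper case following by transposition), and finally to verify the nonnegativity of the multipliers using classical minor identities.

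First, I would apply Cryer's lemma just stated to write $A = LDU$ with $L$ unit lower triangular ITN, $U$ unit upper triangular ITN, and $D$ diagonal TN. Since $A \in GL_n(\IR)$ we have $\det D \neq 0$, so the diagonal entries of $D$ lie in $(0,\infty)$, giving ${\bf d} \in (0,\infty)^n$. A direct computation shows $D(I+wE_{k+1,k})D^{-1}=I+w(d_{k+1}/d_k)E_{k+1,k}$, and similarly for upper bidiagonal factors; hence $D$ may be moved past any elementary bidiagonal factor at the cost of rescaling the corresponding weight by a positive factor. This justifies the freedom in \eqref{bifact} to place $D$ at either end or between bidiagonal factors without affecting the existence claim.

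Next, since transposition preserves ITN and interchanges upper and lower bidiagonal factors while reversing the order of multiplication, it suffices to establish that every unit lower triangular ITN matrix $L$ admits a factorization of the form $\prod_{j=1}^{n-1}\prod_{k=n-1}^{j}(I+w_{j,k}E_{k+1,k})$ with $w_{j,k}\ge 0$. I would prove this by \emph{Neville elimination}: starting from $L$, for each column $k=1,\dots,n-1$ and each row $i=n,n-1,\dots,k+1$ in turn, left-multiply by $(I-w\,E_{i,i-1})$, where $w$ is chosen to zero out the $(i,k)$-entry of the current matrix using the row immediately above (not the pivot row). This procedure terminates in the identity in exactly $\binom{n}{2}$ steps, and reading the operations backwards presents $L$ as the prescribed nested product. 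One could equivalently argue by induction on $n$, stripping off the first-column ``sweep'' as a product of $n-1$ lower bidiagonals and reducing to a unit lower triangular ITN matrix of order $n-1$.

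The main obstacle, and the substantive content of the theorem, is showing that the multipliers $w_{j,k}$ produced by this procedure are all nonnegative. The key tool is the Sylvester/Desnanot--Jacobi identity, which expresses each multiplier as a ratio of two contiguous minors of (a submatrix of) $L$: specifically, of the form $\det M_{\mathrm{num}}/\det M_{\mathrm{den}}$, where both index sets are initial rows and consecutive columns. Since $L$ is TN, both minors are nonnegative, and therefore $w_{j,k}\ge 0$ whenever the denominator is strictly positive. The remaining delicate point is to check that whenever a denominator minor vanishes, the numerator minor also vanishes, so that the multiplier can consistently be taken to be $0$ and the elimination step performs no operation; this follows from the Gantmacher--Krein propagation principle, which forces a ``staircase'' pattern of vanishing minors in any TN matrix and which is available in full strength here because $L$ is invertible so its diagonal minors are all positive. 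Once this nonnegativity is verified, reversing the elimination produces the factorization \eqref{bifact}, completing the proof.
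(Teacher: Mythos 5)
The paper states this as a classical result of Whitney and does not prove it, so there is no in-paper argument to compare against. Taken on its own, your sketch is a sound outline of the standard modern proof (the Neville-elimination route of Gasca--Pe\~na, which post-dates Whitney but establishes the same factorization): Cryer's $LDU$ reduction, the observation that $D(I+wE_{k+1,k})D^{-1}=I+w(d_{k+1}/d_k)E_{k+1,k}$ lets $D$ slide past bidiagonal factors, transposition duality between $L$ and $U$, and Neville elimination to produce the lower bidiagonal factors in the prescribed nested order are all correct and do assemble into a proof of \eqref{bifact}.

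The one step you identify as ``the substantive content'' is also the one you leave least developed, and it deserves more care than an appeal to a ``Gantmacher--Krein propagation principle.'' For a unit lower triangular ITN matrix $L$, the Neville multiplier at stage $(j,k)$ is a ratio of two minors of $L$ taken on \emph{contiguous} row sets and the initial columns $\{1,\dots,j\}$; TN gives nonnegativity of each, but you still must show the procedure never divides by a vanishing minor while facing a nonzero entry to eliminate. The clean statement you want is that in a TN matrix a vanishing contiguous minor forces a block of zeros (the ``shadow'' lemma), together with the invertibility of $L$ (so its leading principal minors, all equal to $1$, anchor the induction). As written, Karlin's Theorem~\ref{nontn} in the paper only covers \emph{principal} minors and does not directly control the off-diagonal contiguous minors that actually appear, so you cannot invoke it wholesale; you need the shadow/zero-propagation lemma explicitly. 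With that lemma cited or proved, your argument closes.

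Two small bookkeeping remarks: first, your indexing ``for each column $k=1,\dots,n-1$ and each row $i=n,\dots,k+1$'' is the right sweep order, but you should check that inverting and reversing the elimination matrices reproduces exactly the paper's nesting $\prod_{j=1}^{n-1}\prod_{k=n-1}^{j}(I+w_{j,k}E_{k+1,k})$ with the claimed index ranges (it does, but the double reversal is easy to get wrong). Second, the statement allows $w_{j,k}=0$, which is essential for ITN (as opposed to TP), so your ``the elimination step performs no operation'' remark is exactly the right convention.
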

%$\spadesuit$Equivalently, the above factorization of a totally positive matrix $A$ can be written as:
%\[
%A = \underbrace{U_1 U_2 \cdots U_k}_{\text{upper bidiagonal factors}} \, D \, \underbrace{L_1 L_2 \cdots L_k}_{\text{lower bidiagonal factors}},
%\]
%where \(D\) is a diagonal matrix, \(U_i\) are upper bidiagonal factors, and \(L_j\) are lower bidiagonal factors.

Building on this factorization, Lusztig \cite{Lu94} in 1994 extended the notion of total positivity to other semisimple Lie groups $G$, by defining the set $G_{\geq 0}$ of TN elements in $G$ as the semigroup generated by the (exponentiated) Chevalley generators. A few years later, using the factorization \eqref{bifact}, Berenstein--Fomin--Zelevinsky \cite{BFZ96} showed that $(0,\infty)^{n^2}$ is diffeomorphic to the space of $n\times n$ TP matrices via the map %$\psi:$
\[({\bf w, w^{'}, d})\xrightarrow{\psi} A({\bf w, w^{'}, d}).\]\smallskip

The primary goal of this article is to investigate semigroup automorphisms of total positivity. The automorphisms of various groups and semigroups of matrices were studied in  the literature \cite{ALMS06,CFL02}.  In the present work, we show that the automorphisms of both semigroups $ITN(n)$ and $TP(n)$ %-- and in fact of every multiplicative semigroup in between them -- 
are equal, and involve a function purely of the determinant and diagonal/antidiagonal conjugation. Here is our first main result:

\begin{utheorem} \label{multiplicative}Let $n\geq 1$ be an integer and let $G(n) = \text{TP}(n)$ or $\text{ITN}(n)$. Then the following are equivalent for an arbitrary map $T : G(n) \to G(n)$.
	\begin{enumerate}
		\item[(i)] $T$ is a semigroup isomorphism.
		\item[(ii)] $T$ has the form 
		\[A \mapsto \mu(\det A)(\det A)^{-\frac{1}{n}} RAR^{-1},\]
		where $\mu: (0, \infty) \to (0, \infty)$ is a bijective  multiplicative map and 
		\[R=\begin{pmatrix}
			r_1 & \cdots & 0 \\
			\vdots & \ddots & \vdots \\
			0 & \cdots & r_n \\
		\end{pmatrix} \quad \text{or} \quad \begin{pmatrix}
		0 & \cdots & r_1 \\
		\vdots & \iddots & \vdots \\
		r_n & \cdots & 0 \\
		\end{pmatrix}\]
		for some positive real numbers $r_1, \dots, r_n$.
	\end{enumerate}
\end{utheorem}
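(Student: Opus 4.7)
The implication (ii) $\Rightarrow$ (i) is a routine check: positive scalar multiplication preserves $\mathrm{TP}(n)$ and $\mathrm{ITN}(n)$; conjugation by a positive diagonal $R$ rescales entries positively, so it preserves both classes; and for antidiagonal $R = DJ$, where $J$ is the reversal permutation matrix, one has $(JAJ)_{ij} = A_{n+1-i,\,n+1-j}$, so $J$-conjugation merely permutes the minors of $A$ among themselves (preserving their positivity), and $R$-conjugation is $J$-conjugation followed by positive-diagonal conjugation. Multiplicativity reduces to that of $\mu$ and $\det$; bijectivity follows from $\det T(A) = \mu(\det A)^n$, which lets one recover $\det A$ via $\mu^{-1}$ and then $A$ from $T(A)$.

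For the converse (i) $\Rightarrow$ (ii), I would analyze how $T$ acts on the Chevalley generators, using Whitney's bidiagonal factorization as the organizing principle. Begin with $G(n) = \mathrm{ITN}(n)$. Since $I$ is the unique two-sided identity, $T(I) = I$. The center $Z(\mathrm{ITN}(n)) = \{cI : c > 0\}$: any central element commutes with every $I + w E_{i,i+1}$ and $I + w E_{i+1,i}$, hence with all $E_{i,i+1}$ and $E_{i+1,i}$, which generate the full matrix algebra $M_n(\mathbb{R})$, forcing the element to be a scalar multiple of the identity. Thus $T$ restricts to a multiplicative bijection $\phi : (0,\infty) \to (0, \infty)$ via $T(cI) = \phi(c) I$, and eventually $\mu(c) := \phi(c^{1/n}) c^{1/n}$.

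The core of the proof is the analysis of how $T$ acts on the one-parameter Chevalley families $\{x_i(w) := I + w E_{i,i+1}\}_{w \geq 0}$ and $\{y_i(w) := I + w E_{i+1,i}\}_{w \geq 0}$, since their ordered products with diagonals exhaust $\mathrm{ITN}(n)$ by Whitney's theorem. Each image $T(\{x_i(w)\})$ is a one-parameter subsemigroup of $\mathrm{ITN}(n)$ through $I$, and I would use (a) the commutation pattern (two such families commute iff their indices differ by at least $2$, mirroring the $A_{n-1}$ Dynkin diagram), (b) the braid relations among adjacent generators, and (c) the interaction with the positive diagonal subgroup $D^+(n)$ (which $T$ must send to another maximal abelian sub-semigroup of $\mathrm{ITN}(n)$; positivity constraints force this image to be conjugate to $D^+(n)$ by a diagonal-positive or antidiagonal-positive matrix $R_0$). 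After pre-composing $T$ with conjugation by such an $R_0$, one reduces to the case where $T$ sends each family $\{x_i(w)\}$ back to itself (or, in the antidiagonal case, to $\{y_{n-i}(w)\}$) via a reparametrization $w \mapsto \alpha_i w$; the braid relations then force all $\alpha_i$'s to coincide, with the common factor absorbed into $\mu$. Whitney's factorization determines $T$ on all of $\mathrm{ITN}(n)$, yielding the form (ii). The $\mathrm{TP}(n)$ case reduces to the $\mathrm{ITN}(n)$ case via an extension argument using that $AC \in \mathrm{TP}(n)$ whenever $A \in \mathrm{ITN}(n)$ and $C \in \mathrm{TP}(n)$ (by Cauchy--Binet and the full row rank of $A$, which ensures that for every $k$-subset $I$ of rows, some $k$-minor $A_{I,K}$ is nonzero and hence positive).

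The principal obstacle is the classification of the images of the Chevalley one-parameter subsemigroups under $T$: without any continuity hypothesis, one must rule out exotic one-parameter subsemigroups of $\mathrm{ITN}(n)$ through $I$, and the proof must exploit the intricate interplay of commutation, braid, and diagonal-subgroup relations. Additionally, the step of showing that $T(D^+(n))$ is conjugate to $D^+(n)$ by a diagonal-positive or antidiagonal-positive matrix---rather than by some more general invertible TN matrix---is a delicate positivity argument that ultimately forces the dichotomy in (ii) between the two shapes of $R$.
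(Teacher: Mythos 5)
Your (ii) $\Rightarrow$ (i) direction is correct and essentially matches the paper's argument (a Cauchy--Binet check). Your proposed reduction of $\mathrm{TP}(n)$ to $\mathrm{ITN}(n)$ via $\hat T(X) = T(A)^{-1}T(AX)$ also matches the paper's Proposition \ref{prop2}. However, for the converse direction your proposed mechanism---classifying the images of the Chevalley one-parameter families $\{x_i(w)\}$, $\{y_i(w)\}$ via commutation, braid relations, and interaction with the diagonal---diverges from the paper's route, and the two obstacles you name at the end are precisely what your sketch does not resolve.

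The first obstacle is a genuine gap: in the absence of any continuity or measurability hypothesis on $T$, there is no reason the image set $T(\{x_i(w)\}_{w>0})$ carries the structure of a ``one-parameter subsemigroup'' amenable to a reparametrization analysis; it is simply a subset of $\mathrm{ITN}(n)$, and the classification you need is not elementary. The paper avoids this entirely by replacing the Chevalley-curve viewpoint with a centralizer viewpoint. Concretely, Lemma \ref{pc4.1} characterizes the non-diagonal $A \in \cS(2)$ with $DAD^{-1} \in C(A)$ for all diagonal $D$ as exactly $aI_2 + bE_{12}$ or $aI_2 + bE_{21}$; this is a purely multiplicative condition that an automorphism provably preserves, so $T$ must permute $\cU(2)$ and $\cL(2)$ as sets. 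For $n \ge 3$ the paper uses Lemma \ref{CDk-0} to identify $C(D_k) = \cS(k-1) \oplus \cS(2) \oplus \cS(n-k-1)$ for diagonal $D_k$ with exactly one repeated consecutive entry, and then Lemma \ref{pc-4} shows $T(C_j) = C_j$ for all $j$ or $T(C_j) = C_{n-j}$ for all $j$ using precisely the ``commute iff $|i-j|>1$'' pattern you allude to (Lemma \ref{pc-3}), but with no braid relations anywhere. Your second obstacle---that $T(\cD(n))$ be diagonal or antidiagonal up to positive conjugation---is handled in the paper more cleanly: $\cD(n)$ is the unique maximal subgroup of $\cS(n)$ (Lemma \ref{groupd}, a two-line argument via Karlin's positivity of principal minors), so $T(\cD(n)) = \cD(n)$ on the nose, and the diagonal/antidiagonal dichotomy emerges later from the $C_j$ permutation pattern, not from the diagonal subgroup alone. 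In short, your sketch identifies the correct targets but the proposed toolkit (braid relations, one-parameter subsemigroups) is not the one that closes the argument; the paper's centralizer machinery and induction on $n$ via $\cS(n-1)\oplus\cS(1)$ (Theorem \ref{remgen}) is the missing ingredient.
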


\begin{rem}Here are two immediate consequences:
	\begin{enumerate}[(i)]
		\item Every semigroup automorphism of $ITN(n)$ sends each elementary bidiagonal generator to another such generator.
		\item In particular, every automorphism not only preserves the multiplicative structure of the semigroup of TP/ITN matrices, but it takes these ``multiplicative generators" onto themselves, and hence, every automorphism takes each bidiagonal factorization to another such.
	\end{enumerate}
\end{rem}
Our second main result shows that the automorphisms of both semigroups $ITN(n)$ and $TP(n)$ -- and in fact, of every multiplicative semigroup in between them satisfying some additional assumptions -- are equal.
\begin{utheorem}\label{multiplicative2} Let $n\geq 1$ be an integer and let $G(n)$ be a semigroup with $TP(n) \subseteq G(n) \subseteq ITN(n)$ such that $G(n)$ is closed under taking positive scalar multiples, and under conjugating by (anti-)diagonal matrices with positive (anti-)diagonal entries. Let $T : G(n) \to G(n)$ be an arbitrary map. Then the following are equivalent:
\begin{enumerate}
	\item[(i)] $T$ is a semigroup isomorphism with $T(TP(n))=TP(n)$.
	\item[(ii)] For all $A\in G(n)$,
	\[T(A) = \mu(\det A)(\det A)^{-\frac{1}{n}} RAR^{-1},\]
	where $\mu$ and $R$ are defined as in Theorem \ref{multiplicative} (ii).
\end{enumerate}
\end{utheorem}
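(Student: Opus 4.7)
The plan is to reduce Theorem \ref{multiplicative2} to Theorem \ref{multiplicative} by restricting to $TP(n)$ and then bootstrapping back to $G(n)$.

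For the direction (ii) $\Rightarrow$ (i), I would verify multiplicativity directly. The scalar factor $s(A) := \mu(\det A)(\det A)^{-1/n}$ is multiplicative because both $\det$ and $\mu$ are, and the conjugation $A \mapsto RAR^{-1}$ is multiplicative; together these give $T(A)T(B)=T(AB)$. Bijectivity follows from bijectivity of $\mu$ and invertibility of $R$. The hypotheses that $G(n)$ is closed under positive scalar multiples and under (anti-)diagonal conjugation ensure $T(G(n))\subseteq G(n)$, and applying the same closures to $TP(n)$ (where strict positivity of all minors is preserved by both operations) gives $T(TP(n))=TP(n)$.

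The substantive direction is (i) $\Rightarrow$ (ii). Since $T$ is a semigroup isomorphism with $T(TP(n))=TP(n)$, its restriction $T|_{TP(n)}$ is a semigroup automorphism of $TP(n)$. Applying Theorem \ref{multiplicative} in the case $G(n)=TP(n)$ yields a bijective multiplicative map $\mu:(0,\infty)\to(0,\infty)$ and an (anti-)diagonal matrix $R$ with positive entries such that
\[
T(B) = \mu(\det B)(\det B)^{-1/n} RBR^{-1} \qquad \text{for every } B \in TP(n).
\]
To extend this formula to all $A\in G(n)$, I would use the classical fact that $ITN(n)\cdot TP(n)\subseteq TP(n)$, so that $AB\in TP(n)$ whenever $A\in G(n)\subseteq ITN(n)$ and $B\in TP(n)$. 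Applying the displayed formula to both $T(AB)$ and $T(B)$ and using $T(AB)=T(A)T(B)$ gives
\[
\mu(\det A)\mu(\det B)(\det A\det B)^{-1/n} RABR^{-1} = T(A) \cdot \mu(\det B)(\det B)^{-1/n} RBR^{-1}.
\]
The right-hand factor $\mu(\det B)(\det B)^{-1/n} RBR^{-1}$ is invertible, so cancelling it on the right produces $T(A)=\mu(\det A)(\det A)^{-1/n} RAR^{-1}$, as desired.

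The main technical point I anticipate is the inclusion $ITN(n)\cdot TP(n)\subseteq TP(n)$. Although classical, it merits a careful justification: one can derive it from Whitney's factorization by writing $A\in ITN(n)$ as a product of elementary bidiagonal matrices and a positive diagonal matrix, and then verifying by a small Cauchy--Binet computation that left-multiplication by any such factor carries $TP(n)$ into itself. Once this is settled, everything else is bookkeeping: Theorem \ref{multiplicative} performs the structural heavy lifting, and the closure hypotheses on $G(n)$ guarantee that the globally defined formula always lands back inside $G(n)$.
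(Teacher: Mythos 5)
Your proof is correct and follows essentially the same route as the paper's. The paper invokes Proposition \ref{prop2} to extend $T|_{TP(n)}$ to a monoid automorphism $\hat T$ of $ITN(n)$ agreeing with $T$ on $G(n)$, then applies Theorem \ref{multiplicative} to $\hat T$; you instead apply Theorem \ref{multiplicative} directly to $T|_{TP(n)}$ and then propagate the formula to all of $G(n)$ by the cancellation $T(A)=T(AB)T(B)^{-1}$ with $B\in TP(n)$ -- which is exactly the computation underlying the extension map in Proposition \ref{prop2}, so the two arguments are functionally the same. One minor note: the fact you flag, that $ITN(n)\cdot TP(n)\subseteq TP(n)$, is already established in the paper as part of Proposition \ref{prop1} (the identity $\cS^L(n)=ITN(n)$), so you could cite that rather than reprove it from Whitney's factorization.
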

%\begin{rem}
%As we can see in the proof, the above theorem holds for any semigroup containing $TP(n)$  and lying inside $ITN(n)$ because we can always extend the map to $ITN(n)$.
%\end{rem}
\begin{example}
Note that for $n \geq 2$, the semigroup $G(n):=TP(n)\sqcup\{\diag(d_1,\ldots,d_n): d_1,\ldots,d_n>0\}$ strictly contains  $TP(n)$  and lies strictly within $ITN(n)$. By Corollary \ref{corpnc3.3}, any semigroup automorphism of $G(n)$ preserves  $TP(n)$. Thus Theorem \ref{multiplicative2} completely classifies the automorphisms of $G(n)$.
\end{example}

\begin{rem}
The multiplicative map $\mu: (0, \infty) \to (0, \infty)$ can be constructed as follows.  
Consider $\IR$ as an infinite-dimensional vector space over the rational
numbers. Let $\phi : \mathbb{R} \to \mathbb{R}$ be a $\mathbb{Q}$-linear isomorphism. Then $\mu(x) = \exp(\phi(\log(x))$ 
is a bijective multiplicative map on $(0,\infty)$, and there are no others. It is perhaps worth remarking here that the classical works of Banach \cite{Banach}, Sierpi\'nsky \cite{Sierpinsky}, and others show that $\mu$ is continuous if and only if it is measurable, if and only if $\phi$ is measurable; but there exist non-measurable $\mathbb{Q}$-linear maps $\phi$.
\end{rem}

We conclude our discussion of the main results with the following remark: The entrywise preservers of the set of TP matrices and of TN matrices were recently classified by Belton--Guillot--Khare--Putinar \cite{BGKP20}. However, this uncovered only the set-theoretic (entrywise) preservers. In contrast, the present work characterizes the matrix functions preserving  the (square) TP or ITN matrices, which form semigroups under multiplication. Thus the maps we classify preserve not only the TP/ITN matrices, but their respective multiplicative structure as well.

\medskip
\subsection*{Organization of the paper} The remaining sections of the paper are devoted to proving our main results. In Section 2, we show that a semigroup automorphism of total positivity can be extended to invertible total nonnegativity. In Section 3, we establish properties of some special subgroups and sub-semigroups  of matrices in $ITN(n)$ in terms of their centralizer. In Section 4, we prove Theorem \ref{multiplicative} for the $2\times 2$ case and in Section 5 for the general case.  We conclude the article with a quick proof of Theorem \ref{multiplicative2}.

\section{Reduction of the Main Problem}

The entirety of this paper (save a few lines to show Theorem B) is devoted to proving Theorem A. As the proof is rather technical and involved, we split it into three sections. We begin in this section by extending an automorphism of the semigroup of $n \times n$ TP matrices to a monoid automorphism of $n \times n$ invertible TN matrices. This requires three preliminary results.  The first one is a classical density result by Whitney.

\begin{theorem}[Whitney,~\cite{Whitney}]\label{dwhitney}
	Given an integer $n \geq 1$, the set of $n \times n$ $TP$
	matrices is dense in the set of $n \times n$ $TN$ matrices.
\end{theorem}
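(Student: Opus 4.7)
Proof plan. The strategy is to leverage the Whitney bidiagonal factorization \eqref{bifact} together with the Berenstein--Fomin--Zelevinsky (BFZ) parametrization of $TP(n)$ via the diffeomorphism $\psi: (0,\infty)^{n^2} \to TP(n)$, in order to produce explicit TP approximations by perturbing the bidiagonal parameters toward the interior of the parameter domain.

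For the invertible case $A \in ITN(n)$: I would invoke the Whitney factorization \eqref{bifact} to write $A = A(\mathbf{w}, \mathbf{w}', \mathbf{d})$ with $\mathbf{w}, \mathbf{w}' \in [0,\infty)^{\binom{n}{2}}$ and $\mathbf{d} \in (0,\infty)^n$, and then set
\[
A_\epsilon := A(\mathbf{w} + \epsilon \mathbf{1},\, \mathbf{w}' + \epsilon \mathbf{1},\, \mathbf{d}),
\]
so that every bidiagonal weight is strictly positive. The perturbed parameter tuple lies in $(0, \infty)^{n^2}$, so by the BFZ diffeomorphism $A_\epsilon = \psi(\mathbf{w} + \epsilon\mathbf{1}, \mathbf{w}' + \epsilon \mathbf{1}, \mathbf{d}) \in TP(n)$. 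Because the right-hand side of \eqref{bifact} is a polynomial in its parameters, $A_\epsilon \to A$ as $\epsilon \to 0^+$, and this yields density of $TP(n)$ inside $ITN(n)$.

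To extend density to all (possibly non-invertible) TN matrices, the natural route is to appeal to the generalization of Whitney's factorization in which $\mathbf{d}$ is permitted to lie in $[0,\infty)^n$ (i.e., zero pivots are allowed). Given such a factorization $A = A(\mathbf{w}, \mathbf{w}', \mathbf{d})$ of a TN matrix $A$, one then applies the analogous perturbation
\[
A_\epsilon := A(\mathbf{w} + \epsilon \mathbf{1},\, \mathbf{w}' + \epsilon \mathbf{1},\, \mathbf{d} + \epsilon \mathbf{1}),
\]
pushing all $n^2$ parameters into $(0, \infty)$. As before, BFZ gives $A_\epsilon \in TP(n)$, and continuity in the parameters gives $A_\epsilon \to A$.

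The main obstacle I anticipate is justifying the bidiagonal factorization in the non-invertible case, since the Whitney theorem as stated in the excerpt only covers $ITN(n)$. If one prefers not to quote this extension, an alternative is to first establish density of $ITN(n)$ in $TN(n)$ as a separate lemma---e.g., by a controlled Gaussian-elimination argument that perturbs vanishing pivots by $\epsilon$, or by left- and right-multiplying $A$ by carefully chosen products of elementary bidiagonals with vanishing weights so as to restore full rank while preserving proximity to $A$---and then combine this with the ITN-to-TP density argument above. Either route reduces the entire density question to a perturbation inside the explicit parameter space of \eqref{bifact}.
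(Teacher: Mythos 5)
The paper does not prove Theorem~\ref{dwhitney}; it cites Whitney~\cite{Whitney} for the result, so there is no internal argument to compare against. Evaluating your blind attempt on its own merits: the invertible case is handled correctly. Perturbing the bidiagonal weights $\mathbf{w},\mathbf{w}'$ from $[0,\infty)^{\binom{n}{2}}$ to $(0,\infty)^{\binom{n}{2}}$ while keeping $\mathbf{d}\in(0,\infty)^n$, invoking the BFZ parametrization to conclude $A_\epsilon\in TP(n)$, and using polynomial continuity to get $A_\epsilon\to A$ is a clean and valid proof that $TP(n)$ is dense in $ITN(n)$.

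The singular case, however, has a genuine gap, and it cannot be dismissed since $TN(n)$ contains non-invertible matrices. Your first route, extending \eqref{bifact} to $\mathbf{d}\in[0,\infty)^n$, is false as stated: the TN matrix $\begin{pmatrix}0&1\\0&0\end{pmatrix}$ cannot be written in the form $(I+wE_{21})\,\diag(d_1,d_2)\,(I+w'E_{12})$ with $w,w',d_1,d_2\geq 0$, since the $(1,1)$ entry forces $d_1=0$, after which the $(1,2)$ entry equals $d_1w'=0\neq 1$. Bidiagonal factorizations of singular TN matrices do exist in the literature, but they have a different shape (more factors, or a preliminary TN rank factorization) than \eqref{bifact}, and the perturbation $+\epsilon\mathbf{1}$ applied to the parameters of such a factorization does not obviously land in the BFZ chart, so the conclusion $A_\epsilon\in TP(n)$ would not follow. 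Your second route, first establishing that $ITN(n)$ is dense in $TN(n)$, is the right reduction, but the two sketches offered do not work as stated: multiplying $A$ on the left and right by products of elementary bidiagonal factors (which are invertible) cannot increase the rank of $A$, and a ``controlled Gaussian elimination perturbing vanishing pivots'' is unjustified since a singular TN matrix need not admit such an elimination and the perturbed product need not remain TN. Making the singular case rigorous requires a genuinely new ingredient, for instance Whitney's reduction theorem or an explicit rank-raising TN-preserving construction; as written, the proposal proves density of $TP(n)$ in $ITN(n)$ but not in $TN(n)$.
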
 

We now recall a result of Karlin concerning invertible totally nonnegative matrices and positivity of their principal minors.
\begin{theorem}[Karlin,~\cite{K68}]\label{nontn}
If $A \in ITN(n)$, all principal minors of $A$ are positive.
\end{theorem}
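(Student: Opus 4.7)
The plan is to combine Cryer's lemma (stated above) with the Cauchy--Binet formula. Write $A = LDU$, where $L, U$ are ITN and, respectively, lower/upper triangular with $1$'s along the diagonal, and $D = \diag(d_1,\ldots,d_n)$ with every $d_i > 0$ (positivity of the $d_i$ is forced by $\det A \neq 0$ together with $d_i \geq 0$). The goal is to show that for every index set $\alpha \subseteq \{1,\ldots,n\}$, the principal minor $\det A[\alpha,\alpha]$ is strictly positive.

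To that end, set $B := LD$, so $A = BU$. Applying Cauchy--Binet twice (once to expand $A[\alpha,\alpha]$ as a sum over $B[\alpha,\beta] \cdot U[\beta,\alpha]$, and using the diagonal structure of $D$ to compute $\det B[\alpha,\beta] = \det L[\alpha,\beta]\prod_{j \in \beta} d_j$) yields
\[
\det A[\alpha,\alpha] \;=\; \sum_{|\beta| = |\alpha|} \det L[\alpha,\beta]\,\Bigl(\prod_{j \in \beta} d_j\Bigr)\,\det U[\beta,\alpha].
\]
Every summand on the right is nonnegative, since $L$ and $U$ are TN and the $d_j$'s are positive. The remaining task is therefore to exhibit one summand that is strictly positive.

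That summand is $\beta = \alpha$: because $L$ is lower triangular with unit diagonal, its principal submatrix $L[\alpha,\alpha]$ is again lower triangular with unit diagonal, whence $\det L[\alpha,\alpha] = 1$; symmetrically $\det U[\alpha,\alpha] = 1$. Hence the $\beta = \alpha$ contribution is exactly $\prod_{j \in \alpha} d_j > 0$, and the full sum is bounded below by this quantity, giving $\det A[\alpha,\alpha] > 0$ as required.

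The main obstacle I anticipate is purely bookkeeping: one must be careful that, in the Cauchy--Binet expansion, the diagonal matrix $D$ really does collapse the double sum (over $\beta,\gamma$ with $\det D[\beta,\gamma] \neq 0$) to a single sum over $\beta = \gamma$, and that the unit-triangular structure of $L, U$ genuinely forces $\det L[\alpha,\alpha] = \det U[\alpha,\alpha] = 1$ for every $\alpha$, not merely for initial segments. Once those two identifications are pinned down, the strict positivity follows immediately, and no appeal to the density theorem (Theorem~\ref{dwhitney}) is required — density alone would only give nonnegativity, so the LDU route is essential for the strict inequality.
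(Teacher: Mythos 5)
The paper states this result as a black box, citing Karlin's book without providing a proof. Your argument is correct and self-contained modulo two facts the paper already cites: Cryer's $LDU$ decomposition (the unlabeled Lemma in Section~1) and the Cauchy--Binet formula (Theorem~\ref{C-BF}). Each step checks out: the positivity of the $d_i$ follows from $D$ being a diagonal ITN matrix with $\det D \neq 0$; the identity $\det B[\alpha,\beta] = \det L[\alpha,\beta]\prod_{j\in\beta} d_j$ for $B = LD$ is just multilinearity of the determinant in columns (equivalently, the collapsing of the $\gamma$-sum you mention, since $\det D[\beta,\gamma] = 0$ unless $\beta = \gamma$); and $\det L[\alpha,\alpha] = \det U[\alpha,\alpha] = 1$ holds for every index set $\alpha$, not merely leading ones, because any principal submatrix of a unit lower (resp.\ upper) triangular matrix is again unit lower (resp.\ upper) triangular. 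This is essentially the standard textbook route to Karlin's result, and you are right that density (Theorem~\ref{dwhitney}) alone would only deliver the weak inequality, so the $LDU$ argument genuinely earns the strict one. One minor remark: no circularity arises, since Cryer's factorization is established independently of Karlin's theorem; but if you were to present this as a proof in the paper, it would be worth a sentence noting that dependence explicitly, since the two classical results are stated in close proximity.
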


The next preliminary result is the well known Cauchy--Binet determinantal identity. To state this identity, we introduce two pieces of notation.
\begin{defn} Let $m,n\geq 1$ be integers and $A\in \mathbb{R}^{m\times n}$.
	\begin{enumerate}[(i)]
		\item Define $[n]:=\{1,2,\ldots,n\}$.
		\item Given subsets $\alpha \subseteq  [m]$ and $\beta \subseteq  [n]$, define $A[\alpha|\beta]$ to be the submatrix of $A$ with rows indexed by $\alpha$ and the columns indexed by $\beta$.
	\end{enumerate}
\end{defn}

\begin{theorem}[Cauchy--Binet formula]\label{C-BF}
Given matrices $A\in \mathbb{R}^{m \times n}$ and $B\in \mathbb{R}^{n \times m}$, we have
\[\det(AB)=\sum_{\gamma\subseteq [n], ~|\gamma|=m} \det A\left[[m]|\gamma\right] \det B\left[\gamma|[m]\right].\]
\end{theorem}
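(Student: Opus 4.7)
The plan is to derive the Cauchy--Binet identity from two separate computations of $\det M$ for the auxiliary $(m+n)\times(m+n)$ block matrix
\[ M = \begin{pmatrix} A & 0 \\ -I_n & B \end{pmatrix}, \]
and then equate the two answers.

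For the first calculation, I would add $A$ times the bottom $n$ rows to the top $m$ rows; this block elementary row operation has unit determinant, so it transforms $M$ into
\[ \begin{pmatrix} 0 & AB \\ -I_n & B \end{pmatrix} \]
without changing $\det M$. A generalized Laplace expansion along the first $m$ rows then only admits the last $m$ columns (the rest of the top block is zero), yielding $\det M = (-1)^{mn+n}\det(AB)$; the sign comes from the column-index sum in the Laplace rule together with $\det(-I_n)=(-1)^n$.

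For the second calculation, I would apply the generalized Laplace expansion directly to $M$ along its first $m$ rows. Since the upper-right block of $M$ is zero, the only contributing column subsets are those $\gamma\subseteq[n]$ with $|\gamma|=m$, and the corresponding top minor is $\det A[[m]|\gamma]$. The complementary minor sits in the bottom $n$ rows and splits into the $(n-m)\times(n-m)$ submatrix of $-I_n$ on the columns $[n]\setminus\gamma$ together with the $m\times m$ matrix $B[\gamma|[m]]$ on the last $m$ columns; a short permutation argument evaluates this complementary minor as $(-1)^{n-m}$ times a $\gamma$-dependent permutation sign times $\det B[\gamma|[m]]$. Summing over $\gamma$ and equating with the first expression for $\det M$ produces the identity.

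The only delicate point is the bookkeeping in the second calculation: one must verify that the Laplace-rule sign for the chosen $\gamma$, the signature of the permutation that moves the columns of $-I_{n-m}$ into diagonal position, and the $(-1)^{n-m}$ from the $-I$-factor combine to a uniform $(-1)^{mn+n}$ independent of $\gamma$. If the signs become cumbersome, the cleanest alternative is the exterior-algebra proof: regard $A$ and $B$ as linear maps, invoke functoriality $\wedge^m(AB)=\wedge^m A\circ\wedge^m B$, evaluate both sides on $e_1\wedge\cdots\wedge e_m$, and expand in the standard basis of $\wedge^m\mathbb{R}^n$; the alternating property of the wedge then handles every sign automatically and exhibits the right-hand side of Cauchy--Binet directly.
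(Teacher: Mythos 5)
The paper does not prove this theorem: Cauchy--Binet is introduced as ``the well known Cauchy--Binet determinantal identity'' and stated without proof, so there is no internal argument to compare against. Your block-determinant derivation via
\[
M = \begin{pmatrix} A & 0 \\ -I_n & B \end{pmatrix}
\]
is the standard proof and is correct. The sign bookkeeping you flag does close uniformly: picking rows $[m]$ and columns $\gamma$ in the generalized Laplace expansion of $M$ contributes $(-1)^{m(m+1)/2 + \sigma(\gamma)}$ where $\sigma(\gamma)=\sum_{g\in\gamma}g$; extracting the $-I_{n-m}$ block from the $n\times n$ complementary minor (rows $\gamma^c=[n]\setminus\gamma$, columns $\{1,\dots,n-m\}$) contributes a further $(-1)^{(n-m)(n-m+1)/2 + n(n+1)/2 - \sigma(\gamma)}$ together with $\det(-I_{n-m})=(-1)^{n-m}$. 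The $\sigma(\gamma)$ terms cancel, and the residual exponent $T_m + T_{n-m} + T_n + (n-m)$ (with $T_k=k(k+1)/2$) reduces mod $2$ to $mn+n$ via the identity $T_n = T_m + T_{n-m} + m(n-m)$, matching your first computation $\det M = (-1)^{mn+n}\det(AB)$. So the sketch is sound as written; filling in those few lines completes it. The exterior-algebra alternative you mention, evaluating $\wedge^m(AB)=\wedge^m A\circ\wedge^m B$ on $e_1\wedge\cdots\wedge e_m$, is equally valid and sidesteps the sign computation entirely.
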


We now provide a characterization of $ITN(n)$ in terms of the following sets:  Given an integer $n \geq 1$, define
\begin{eqnarray}
	\cS^L(n) &:= & \{ X \in \mathbb{R}^{n \times n} : XA \in TP(n)\ \forall A \in TP(n)\},\nonumber\\
\cS^R(n) &:= & \{ X \in \mathbb{R}^{n \times n} : AX \in TP(n)\ \forall A \in TP(n)\}, \\%\nonumber\\
\cS(n) &:= & \cS^L(n) \cap \cS^R(n).\nonumber
\end{eqnarray}
It is easy to verify that all of these are submonoids of $GL_n(\mathbb{R})$.

\begin{prop}\label{prop1}
The monoids $\cS^L(n), \cS^R(n), \cS(n)$ are all equal, and equal to $ITN(n)$, the monoid of all invertible $n \times n$ TN matrices.
\end{prop}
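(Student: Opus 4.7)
The plan is to show the two obvious chains of inclusions, namely $ITN(n) \subseteq \cS(n) \subseteq \cS^L(n) \cap \cS^R(n)$ (trivial) together with $ITN(n) \subseteq \cS^L(n)$ and $ITN(n) \subseteq \cS^R(n)$, and in the reverse direction $\cS^L(n) \cup \cS^R(n) \subseteq ITN(n)$. Once both directions are established, all four sets collapse to $ITN(n)$.

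For the forward inclusion $ITN(n) \subseteq \cS^L(n)$ (the argument for $\cS^R(n)$ is symmetric), I would fix $X \in ITN(n)$ and $A \in TP(n)$, and apply the Cauchy--Binet formula (Theorem \ref{C-BF}) to an arbitrary minor
\[
\det\bigl((XA)[\alpha|\beta]\bigr) = \sum_{\gamma \subseteq [n],\ |\gamma| = |\alpha|} \det X[\alpha|\gamma]\cdot \det A[\gamma|\beta].
\]
Every term is nonnegative because $X$ is TN and $A$ is TP. To force strict positivity it suffices to exhibit one index set $\gamma$ for which $\det X[\alpha|\gamma] > 0$; taking $\gamma = \alpha$ and invoking Karlin's theorem (Theorem \ref{nontn}), which guarantees that every principal minor of an ITN matrix is positive, accomplishes exactly this. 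Hence $XA \in TP(n)$, so $X \in \cS^L(n)$.

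For the reverse inclusion $\cS^L(n) \subseteq ITN(n)$, I would use Whitney's density theorem (Theorem \ref{dwhitney}) to pick a sequence $A_m \in TP(n)$ with $A_m \to I$. For any $X \in \cS^L(n)$ the products $XA_m$ lie in $TP(n) \subseteq TN(n)$ and converge to $X$; since $TN(n)$ is closed, $X$ is TN. Invertibility of $X$ comes for free: fixing any single $A \in TP(n)$, the matrix $XA$ is TP and thus has nonzero determinant, so $\det X \neq 0$. The argument for $\cS^R(n)$ is identical by approximating on the right.

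Combining the two directions yields $ITN(n) \subseteq \cS^L(n) \cap \cS^R(n) = \cS(n)$ and $\cS^L(n), \cS^R(n) \subseteq ITN(n)$, so all three sets coincide with $ITN(n)$. The only non-routine step is ensuring strict positivity of an arbitrary minor in the Cauchy--Binet expansion, and Karlin's positivity of principal minors is precisely what bridges TN-ness of $X$ to TP-ness of $XA$; without it one would only obtain $XA \in TN(n)$.
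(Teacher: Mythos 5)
Your proof is correct and follows essentially the same strategy as the paper: one direction via Cauchy--Binet plus Karlin's positivity of principal minors to isolate a strictly positive term, and the other direction via Whitney density applied to a sequence $A_m \to I$. The only cosmetic difference is that you lead with $\cS^L(n)$ and pick the principal-minor index $\gamma = \alpha$, whereas the paper leads with $\cS^R(n)$ and picks $\gamma = \beta$; the arguments are symmetric and interchangeable.
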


\begin{proof}
We first claim that $ITN(n)=\cS^R(n)$. To show  $ITN(n) \subseteq \cS^R(n)$, suppose $X$ is $ITN(n)$. Then for all $A \in TP(n)$, we may deduce that $AX$ is TP, by a simple application of the Cauchy--Binet identity and Theorem \ref{nontn}. Indeed, for any two nonempty  subsets $\alpha, \beta \subseteq [n]$ with the same cardinality, we have

\[ \det (AX)[\alpha|\beta] = \sum_{\gamma\subseteq [n], ~|\gamma|=|\alpha|} \det A[\alpha|\gamma] \det X[\gamma|\beta] \geq \det A[\alpha|\beta] \det X[\beta|\beta] >0. \]
%from which we may conclude that \[\det (AX)[\alpha|\beta] \geq \det A[\alpha|\beta] \det X[\beta|\beta] >0.\]

%For the reverse inclusion, suppose $X\in \cS^R(n)$, so that $X$ is invertible. To the contrary assume that $X$ is not ITN. Then there exists a minor of $X$ that is negative, say $\det X[\alpha|\beta] <0$ for some $\alpha, \beta \subseteq [n]$. Since $I_n$ is ITN, by the Whitney density theorem \ref{dwhitney}, there exists a sequence $\{A_k\}$  of $n \times n$ TP matrices converging to $I_n$. By the definition of $\cS^R(n)$, $A_kX\in TP(n)$ for all $k\geq 1$ and the continuity of the determinant function gives us
%\[ \det (A_k X)[\alpha|\beta] \rightarrow \det X[\alpha|\beta]. \]
%Thus $\det (A_m X)[\alpha|\beta] < 0$ for some $m$, a contradiction.

For the reverse inclusion, suppose $X\in \cS^R(n)$, so that $X$ is invertible. By the Whitney density theorem \ref{dwhitney}, there exists a sequence $A_k \in TP(n)$ converging to $I_n$.  Now for any $\alpha, \beta \subset [n]$ with $|\alpha| = |\beta|>0$, we have
\[
\det (A_k X) [ \alpha | \beta ] \to \det X [\alpha | \beta].
\]
Since the left-hand sequence of determinants is positive, we get $\det X [\alpha | \beta] \geq 0$ for all $\alpha,\beta$ with $|\alpha| = |\beta|>0$. Hence $X \in ITN(n)$, and so $ITN(n)=\cS^R(n)$.

Similarly, one shows that $ITN(n)$ equals $\cS^L(n)$, and hence equals their intersection $\cS(n)$.
\end{proof}

For the rest of our discussion, we will use $\cS(n)$ to denote the monoid $ITN(n)$. We may drop the dependence on the parameter $n$ if it is clear from context. Using Proposition \ref{prop1}, we now show that any semigroup automorphism of $TP(n)$ can be extended to a monoid automorphism of $\cS(n)$.

\begin{prop}\label{prop2}
Let $T$ be a semigroup automorphism of $TP(n)$.  Then $T$ can be extended to a monoid automorphism $\hat{T}$ of $\cS(n)$.% such that $\hat T(A) =T(A)$ for all $A\in TP(n)$.
\end{prop}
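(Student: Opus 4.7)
The plan is to exploit the fact, built into Proposition~\ref{prop1}, that $TP(n)$ ``absorbs'' $\mathcal{S}(n)$ from both sides: for every $X\in\mathcal{S}(n)$ and every $A\in TP(n)$, both $AX$ and $XA$ lie in $TP(n)$. This suggests defining, for $X\in\mathcal{S}(n)$ and any $A\in TP(n)$,
\[
\hat T(X) := T(A)^{-1}\, T(AX),
\]
where the inverse is taken in $GL_n(\mathbb{R})$. Four things then need to be verified: the formula does not depend on the choice of $A$; the output lies in $\mathcal{S}(n)$; $\hat T$ is a monoid homomorphism extending $T$; and $\hat T$ is bijective.

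The well-definedness is the single most important step. I would introduce a second witness $B\in TP(n)$ and observe that, by Proposition~\ref{prop1}, $XB$ and $AXB$ also lie in $TP(n)$. Applying $T$ to the two legal factorizations $A\cdot(XB) = (AX)\cdot B$ gives the identity
\[
T(A)\,T(XB) \;=\; T(AX)\,T(B), \qquad \text{i.e.,} \qquad T(A)^{-1}T(AX) \;=\; T(XB)\,T(B)^{-1}.
\]
The right-hand side is independent of $A$, so $\hat T(X)$ is unambiguously defined. As immediate byproducts, $\hat T(I) = T(A)^{-1}T(A) = I$, and for $X\in TP(n)$ one gets $\hat T(X) = T(A)^{-1}T(A)T(X) = T(X)$, so $\hat T$ extends $T$.

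To check $\hat T(X)\in\mathcal{S}(n)$, I would again invoke Proposition~\ref{prop1}: it suffices to show $\hat T(X)\cdot C\in TP(n)$ for every $C\in TP(n)$. Since $T$ is a bijection on $TP(n)$, write $C = T(B)$ with $B\in TP(n)$. Then, using $XB\in TP(n)$,
\[
\hat T(X)\, C \;=\; T(A)^{-1}T(AX)\,T(B) \;=\; T(A)^{-1}T\bigl(A(XB)\bigr) \;=\; T(XB) \;\in\; TP(n).
\]
Multiplicativity $\hat T(XY) = \hat T(X)\hat T(Y)$ follows from a single telescoping: choose any $A\in TP(n)$ and use $AX\in TP(n)$ as the witness for $\hat T(Y)$, so that
\[
\hat T(X)\hat T(Y) \;=\; T(A)^{-1}T(AX)\cdot T(AX)^{-1}T(AXY) \;=\; T(A)^{-1}T(AXY) \;=\; \hat T(XY).
\]

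For bijectivity, I would run the same construction on the semigroup automorphism $T^{-1}$ of $TP(n)$, producing $\widehat{T^{-1}}: \mathcal{S}(n)\to\mathcal{S}(n)$, and then argue uniqueness: the composition $\Phi := \widehat{T^{-1}}\circ\hat T$ is a monoid endomorphism of $\mathcal{S}(n)$ restricting to the identity on $TP(n)$. For any $X\in\mathcal{S}(n)$ and $A\in TP(n)$, evaluating on $AX\in TP(n)$ two ways gives $\Phi(AX) = AX$ (since $AX\in TP(n)$) and $\Phi(AX) = A\,\Phi(X)$ (by multiplicativity with $\Phi(A)=A$); cancelling the invertible $A$ forces $\Phi(X)=X$. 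The other composition is symmetric, so $\hat T$ is a monoid automorphism. The hard part is conceptual rather than computational: one must recognize that the two-sided absorption in Proposition~\ref{prop1} is precisely what makes well-definedness, the containment $\hat T(X)\in\mathcal{S}(n)$, and multiplicativity all fall out of the same elementary identity $T(A)T(XB)=T(AX)T(B)$; once that is in place, every remaining step is bookkeeping.
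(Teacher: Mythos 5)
Your proposal is correct and takes essentially the same approach as the paper: the same definition of $\hat T$, the same well-definedness identity $T(A)T(XB)=T(AX)T(B)$, and the same use of Proposition~\ref{prop1} to show $\hat T(X)\in\cS(n)$. Your telescoping argument for multiplicativity (choosing $AX$ as the witness for $\hat T(Y)$) and your cancellation argument for bijectivity are slightly slicker than the paper's computations (which conjugate by $A$ on both sides, and write down an explicit inverse map $U$), but these are cosmetic variations on the same proof.
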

\begin{proof}
 Let $A\in TP(n)$ and define  $\hat T: \cS(n) \rightarrow \cS(n)$ by
\[\hat T(X): = T(A)^{-1}T(AX) = T(XA) T(A)^{-1} \quad \hbox{for all } X \in \cS(n).\]
Then $\hat T (I_n)=I_n$ and $\hat{T} \equiv T$ on $TP(n)$, since $T$ is an automorphism on $TP(n)$.
We now show that $\hat{T}$ is well defined and independent of the choice of $A$. Let  $X \in \cS(n)$. For any $B \in TP(n)$, by Proposition \ref{prop1}, $AX, BX, XA, XB, AXB \in TP(n)$. Thus $T(A)T(XB) = T(AXB) = T(AX)T(B)$.
It follows that
 \begin{equation}\label{eqprop2}
T(A)^{-1}T(AX) = T(XB)T(B)^{-1}~~ \hbox{for any}~~ B\in TP(n).
\end{equation}
If $A = B$ then $T(A)^{-1}T(AX) = T(XA)T(A)^{-1}=\hat T(X)$. By \eqref{eqprop2}, the definition of $\hat T$ is independent of the choice of $A \in TP(n)$.  Now to show $\hat T(X) \in \cS(n)$ for all $X\in \cS(n)$, it suffices to show that $\hat T(X)C \in TP(n)$ for all $C\in TP(n)$.  Let $C\in TP(n)$. Since $T$ is bijective, there exists $B \in TP(n)$ such that $C=T(B)$. By the definition of $\hat T$, $\hat T(X)C=T(XB)\in TP(n)$, since $T$ is an automorphism of $TP(n)$.  Thus $\hat T$ is a well-defined function from $\cS(n)$ to $\cS(n)$.

We next show that $\hat T$ is multiplicative on $\cS(n)$. For any $X_1, X_2 \in \cS(n)$ and $A\in TP(n)$,
\begin{eqnarray*}
\hat T(X_1X_2) &=& T(A)^{-1}T(AX_1X_2) \\&=& T(A)^{-1}T(AX_1X_2A)T(A)^{-1}\\
&=& T(A)^{-1}T(AX_1) T(X_2A)T(A)^{-1}\\& =& \hat T(X_1)\hat T(X_2).
\end{eqnarray*}
Thus $\hat T$ is multiplicative.  To show $\hat T$ is bijective, let $A\in TP(n)$ and denote $ T(A)=B$.  Define the map $U: \cS(n) \to \cS(n)$ by
	\[U(X) := (T^{-1}(B))^{-1} T^{-1}(BX)~~ \hbox{for all}~~ X \in \cS(n).\]
Then for any $X \in \cS(n)$, 
\begin{eqnarray*}
U\circ \hat T(X) &=& U(B^{-1}T(AX))\\
&=& (T^{-1}(B))^{-1} T^{-1}(B B^{-1}T(AX)) \\
&=& A^{-1} (AX)=X
\end{eqnarray*}
%$$T_2\circ T_1(X) = T_2(T(A)^{-1}T(AX)) = T_2(B^{_1}T(AX)) 
%= (T^{-1}(B))^{-1} T^{-1}(B B^{-1}T(AX)) = A^{-1}(AX) = X.$$
Similarly, one can show  $\hat T \circ U(X) = X$ for all $X\in \cS(n)$. Thus $U$ is the inverse of $\hat T$. 
\end{proof}

By Proposition \ref{prop2}, to prove $(i)\implies (ii)$ of Theorem \ref{multiplicative}, it suffices to consider $G(n)=\cS(n)$ and $T$ as a map from $\cS(n)$ to $\cS(n)$.

\section{Special ITN Matrices and their centralizer}
To study an automorphism $T$ of $\cS(n)$, it is useful to study some special submonoids of $\cS(n)$ that will be preserved by $T$. To begin with, we need some basic definitions and notations. These are isolated here and used below without further reference.
\begin{defn}
Let $m,n \geq 1$ be integers. %with $n\geq m$
\begin{enumerate}[(i)]
	%\item Define the sets $[m,n]:=\{m,m+1,\ldots,n\}$ and $[n]:=\{1,\ldots,n\}$.
	%\item Let $R_n = E_{1,n} + E_{2,n-1} + \cdots + E_{n,1}$ denote the anti-diagonal permutation matrix.
	\item We denote the $n\times n$ antidiagonal matrix with all antidiagonal entries $1$ by $P_n:$
	\[P_n:=\begin{pmatrix}
		0 & \cdots & 1 \\
		\vdots & \iddots & \vdots \\
		1 & \cdots & 0 \\
	\end{pmatrix} \in \{ 0, 1 \}^{n\times n}.\]
	\item Let $\cD(n)$  denote the set of all $n\times n$ diagonal matrices with positive diagonal entries.
	\item We denote by $Z^+(n) := Z(GL_n(\mathbb{R})) \cap ((0,\infty))^n$ the subgroup of scalar matrices with \textit{positive} diagonal entries.
	\item Define $\cL(n):=\{aI+bE_{i,{i-1}}\in \mathbb{R}^{n\times n}: a, b>0 \hbox{ and } 2\leq i\leq n\}$ %denote the set of all $n\times n$ elementary lower bidiagonal matrices. Similarly  
	and $\cU(n):=\{aI+bE_{i,{i+1}}\in \mathbb{R}^{n\times n}: a, b>0 \hbox{ and } 1\leq i\leq n-1\}$.
	 %denote the set of all $n\times n$ elementary upper bidiagonal matrices.
	 \item For positive integers $n_1,\ldots,n_k$, define $\cS(n_1)\oplus\cdots\oplus \cS(n_k):=\{A_1\oplus\cdots \oplus A_k: A_i \in \cS(n_i) \text{ for all } i\in [k-1]\}$.
	 \item Let $\cS_0(n):=SL_n(\mathbb{R})\cap\cS(n)$. Similarly, we define the subsets $\cD_0(n), \cL_0(n)$ and $\cU_0(n)$.
\end{enumerate}
\end{defn}
%\begin{lemma}
%	Let $A\in \mathbb{R}^{2\times 2}$ be a non-scalar matrix. Then the basis of the subspace $\{B\in \mathbb{R}^{2\times 2}: AB=BA\}$ is $\{I_2,A\}$.
%\end{lemma}
We first identify the maximal subgroup contained in $\cS(n)$.
%We are now prepared to prove the following.

\begin{lemma} \label{groupd}
	Let $n\geq 2$. The set $\cD(n)$ is the unique maximal subgroup in $\cS(n)$.
	%The set $\cD(n)$ is a group in $\cS$ and is a maximal group in the inclusion sense,
	%as every group in $\cS(n)$ can only consist of invertible diagonal TN matrices.
\end{lemma}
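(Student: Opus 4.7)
The plan is to establish the stronger claim that $\cD(n)$ coincides with the group of units of the monoid $\cS(n)=ITN(n)$. This suffices, because every subgroup of a monoid is automatically contained in its group of units, and the group of units is itself a subgroup; so once identified, $\cD(n)$ is the unique maximal subgroup. The inclusion $\cD(n)\subseteq \cS(n)$ together with closure under inversion within $\cS(n)$ is immediate, since both $D$ and $D^{-1}$ are diagonal with positive entries. The content of the lemma thus reduces to: if $A, A^{-1}\in\cS(n)$, then $A$ is diagonal.

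To prove this, I would apply Cryer's decomposition to both $A$ and $A^{-1}$, writing $A=LDU$ and $A^{-1}=L'D'U'$ with $L, L'$ lower unitriangular TN, $U, U'$ upper unitriangular TN, and $D, D'\in\cD(n)$. Inverting the second equation, equating with the first, then multiplying on the left by $U'$ and on the right by $L'$, yields the key identity
\[ (U'L)\, D\, (UL') = (D')^{-1}.\]
Set $M:=U'L$ and $N:=UL'$. By Cauchy--Binet, both are invertible TN matrices. The right-hand side is diagonal with positive entries, so every off-diagonal entry of the left-hand side vanishes:
\[ \sum_k M_{ik}\, D_{kk}\, N_{kj} = 0 \quad (i\neq j).\]
Each summand is nonnegative and $D_{kk}>0$, so $M_{ik} N_{kj} = 0$ for every $i\neq j$ and every $k$. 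Fixing $k$, this forces the nonzero-row-support of column $k$ of $M$ and the nonzero-column-support of row $k$ of $N$ to lie in a common singleton $\{i_k\}$, nonempty by invertibility. Hence $k\mapsto i_k$ is a bijection and $M, N$ are generalized permutation (monomial) matrices with positive weights.

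A TN monomial matrix with positive weights is necessarily diagonal, because any non-identity permutation produces a $2\times 2$ antidiagonal submatrix with negative determinant. So $M=U'L$ and $N=UL'$ are positive diagonal matrices. To conclude, I would extract $L=U=L'=U'=I$. Expanding $(U'L)_{ji}=0$ for $j>i$ via the triangular structure gives $L_{ji} + (\text{nonnegative terms}) = 0$, forcing $L_{ji}=0$; hence $L=I$. With $L=I$, expanding $(U'L)_{ji}$ for $j<i$ reduces to $U'_{ji}=0$, so $U'=I$. The symmetric argument applied to $N=UL'$ gives $U=L'=I$. Therefore $A = LDU = D \in \cD(n)$, as required.

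The main obstacle is the final factor-extraction step: once the mixed products $U'L$ and $UL'$ are shown to be positive diagonal, one still has to back out from the triangular shape of each individual factor that every unitriangular factor is itself the identity. The mechanism is that in the relevant sum expansions every term is nonnegative, so vanishing of the total forces each term to vanish; the analysis is delicate but ultimately elementary. All the rest is bookkeeping once the key identity $(U'L)D(UL')=(D')^{-1}$ is in hand, since it reduces the problem to a purely entrywise support analysis.
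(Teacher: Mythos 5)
Your proposal is correct, and it takes a genuinely different route from the paper's. The paper's argument is a two-line observation: by Karlin's theorem (Theorem~\ref{nontn}) every $A\in\cS(n)$ has strictly positive diagonal entries, and then one invokes the sign structure of the inverse of an ITN matrix (equivalently, the checkerboard sign pattern $(-1)^{i+j}(A^{-1})_{ij}\geq 0$) to see that a non-diagonal $A$ forces $A^{-1}$ to have a strictly negative entry, so $A^{-1}\notin\cS(n)$. Your argument instead starts from Cryer's $LDU$ decomposition of both $A$ and $A^{-1}$, derives the identity $(U'L)D(UL')=(D')^{-1}$, and runs a purely entrywise support analysis: nonnegativity forces each off-diagonal sum to vanish term-by-term, whence $U'L$ and $UL'$ are monomial, hence (being TN) diagonal, and the triangular shape of the individual factors then forces $L=U=L'=U'=I$. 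What each buys: the paper's proof is shorter but leans on a structural fact about inverses of TN matrices that it leaves implicit; yours is longer but entirely self-contained given the $LDU$ decomposition, with every step an elementary nonnegativity or support argument. One small point worth flagging: the opening claim that every subgroup of a monoid sits inside its group of units is false in general (a subgroup can have a different idempotent identity), but it is valid here because $\cS(n)\subset GL_n(\mathbb{R})$ is cancellative, so the only idempotent is $I_n$ and inverses agree with matrix inverses; you should state that reduction explicitly.
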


\begin{proof}
Note that $\cD(n)$ is a group in $\cS(n)$.  Now, suppose $H$ is a group under multiplication in $\cS(n)$. Let $A\in H$ be a non-diagonal matrix.  Since $A$ is ITN, by Theorem \ref{nontn}, all the diagonal entries of $A$ are positive. Thus $A^{-1} \in H$ has a negative entry, a contradiction. Thus $H\subseteq \cD(n)$.
\end{proof}
An immediate consequence of the above lemma is the following:
\begin{cor}\label{corpnc3.3}
	If $G(n) = TP(n) \sqcup \mathcal{D}(n)$, and $\tilde{T} : G(n) \to G(n)$ is a multiplicative semigroup automorphism, then $\tilde{T}$ maps $TP(n)$ onto itself.
\end{cor}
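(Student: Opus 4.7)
The plan is to exploit the partition $G(n) = TP(n) \sqcup \cD(n)$ together with the group-theoretic characterization of $\cD(n)$ provided by Lemma \ref{groupd}. Since $\tilde{T}$ is a bijection on $G(n)$, once I show that $\tilde{T}(\cD(n)) = \cD(n)$, the complement $TP(n)$ must also be preserved. So the whole task reduces to proving $\tilde{T}(\cD(n)) = \cD(n)$.

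First I would observe that $\tilde{T}$ sends the identity to the identity. Indeed, $I_n \in G(n)$ and $I_n^2 = I_n$, so $\tilde{T}(I_n)$ is an idempotent element of $G(n) \subseteq \cS(n)$; since every element of $\cS(n)$ is invertible, the only idempotent matrix in $\cS(n)$ is $I_n$ itself, forcing $\tilde{T}(I_n) = I_n$. Consequently, for any $A \in \cD(n)$, the relation $\tilde{T}(A)\,\tilde{T}(A^{-1}) = \tilde{T}(I_n) = I_n$ shows that $\tilde{T}$ carries the inverse (in $GL_n(\mathbb{R})$) of $A$ to the inverse of $\tilde{T}(A)$. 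In particular, $\tilde{T}(\cD(n))$ is closed under matrix multiplication and under taking inverses in $GL_n(\mathbb{R})$, so it is a group contained in $G(n) \subseteq \cS(n)$.

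Next I would invoke Lemma \ref{groupd}: any subgroup of $\cS(n)$ sits inside the unique maximal subgroup $\cD(n)$. Applying this to $\tilde{T}(\cD(n))$ gives $\tilde{T}(\cD(n)) \subseteq \cD(n)$. Repeating the same argument with $\tilde{T}^{-1}$ in place of $\tilde{T}$ yields $\tilde{T}^{-1}(\cD(n)) \subseteq \cD(n)$, i.e.\ $\cD(n) \subseteq \tilde{T}(\cD(n))$. Together these force $\tilde{T}(\cD(n)) = \cD(n)$.

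Finally, because $\tilde{T}$ is a bijection of $G(n)$ that fixes the subset $\cD(n)$ setwise, it fixes the complementary subset $TP(n) = G(n) \setminus \cD(n)$ setwise as well. There is no serious obstacle: the only point that needs a brief justification is the claim that $\tilde{T}(I_n) = I_n$ (handled via idempotence in an invertible matrix semigroup), after which everything falls out of the maximality statement in Lemma \ref{groupd}.
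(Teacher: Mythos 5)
Your proof is correct and matches the paper's argument step for step: show $\tilde T(I_n)=I_n$, deduce $\tilde T(\cD(n))$ is a subgroup, invoke Lemma \ref{groupd} for the inclusion $\tilde T(\cD(n))\subseteq\cD(n)$, repeat with $\tilde T^{-1}$ to get equality, and pass to complements. The only difference is that you spell out why $\tilde T(I_n)=I_n$ (via idempotence), which the paper leaves implicit.
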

\begin{proof}
	Since $\tilde{T}(I)=I$, $\tilde{T}(\cD(n))$ is a subgroup in $G(n)$. By Lemma \ref{groupd}, $\tilde{T}(\cD(n))\subseteq \cD(n)$. Since $\tilde{T}$ is bijective,  similarly one can show that $\tilde{T}^{-1}(\cD(n))\subseteq \cD(n)$,  and  so $\tilde{T}(\cD(n))= \cD(n)$. This implies $\tilde{T}(TP(n))=\tilde T \left(G(n)\setminus \cD(n)\right)=G(n)\setminus \tilde T(\cD(n))= TP(n)$.
\end{proof}

To determine some special submonoids of $\cS(n)$ on which $T$ will induce an automorphism, we now study matrices $D\in \cD(n)$ and their centralizers.

\begin{defn}
For $A \in \cS(n)$, the centralizer of $A$ with respect to $\cS(n)$ is denoted and defined as
\[C(A):= \{X \in \cS(n): AX = XA\}.\]

\end{defn}
Note that if $T$ is a semigroup automorphism of $\cS(n)$, then $T$ will map $C(A)$ onto $C(T(A))$. This leads us to the following properties: 

\begin{rem}\label{rem1}
	Let $n\geq 1$ be an integer and $T$ be an automorphism on $\cS(n)$. We have the following consequences:
	\begin{enumerate}[(i)]
		\item By a straightforward calculation, one can verify that $A \in \cS(n)$ satisfies $C(A) = \cS(n)$ if and only if $A = aI_n$ for some $a > 0$. Thus $T$ induces an automorphism on the subgroup $Z^+(n) = \{aI: a> 0\}$ of $\cS(n)$.
		\item $T(aI_n) = \gamma(a)I_n$ for some bijective multiplicative map $\gamma: (0, \infty) \rightarrow (0,\infty)$.
		\item Since $T(I)=I$, by Lemma \ref{groupd}, $T(\cD(n))=\cD(n)$. Thus $T$ induces an automorphism on $\cD(n)$.
	\end{enumerate}
\end{rem}

\begin{lemma} \label{CDk-0}   Let $n, n_1,\ldots,n_k$ be positive integers. Suppose 
	$D = d_1 I_{n_1} \oplus \cdots \oplus d_k I_{n_k} \in \cD(n)$ with $d_j \ne d_{j+1}$ for 
	$j \in[k-1]$. Then
	$C(D) = \cS(n_1) \oplus \cdots \oplus \cS(n_k)$.
\end{lemma}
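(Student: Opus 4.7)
The plan is to prove the two containments separately. The inclusion $\cS(n_1)\oplus\cdots\oplus \cS(n_k) \subseteq C(D)$ is the easy direction: a block-diagonal matrix whose block structure refines that of $D$ commutes with $D$ automatically, and such a matrix lies in $\cS(n)$ whenever each diagonal block is ITN, because any minor of a block-diagonal matrix either vanishes (when the row/column selections have incompatible sizes on some block) or factors as $\prod_i \det X_i[\alpha_i|\beta_i]\geq 0$, while $\det X = \prod_i \det X_i \neq 0$.

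For the reverse containment, I would take $X \in C(D)$ and decompose it into a $k \times k$ block matrix $X = (X_{ij})$ matching the partition of $D$. Comparing $(i,j)$-blocks in $DX = XD$ gives $(d_i - d_j) X_{ij} = 0$, so $X_{ij} = 0$ whenever $d_i \neq d_j$. Since the hypothesis only requires $d_j \neq d_{j+1}$ for consecutive indices, this alone does \emph{not} rule out nonzero off-diagonal blocks $X_{ij}$ for which $d_i = d_j$ with $|i-j| \geq 2$. Eliminating these is where total nonnegativity must enter, and this is the main obstacle of the proof: commutation alone is insufficient, even though in the analogous problem over $GL_n(\IR)$ the centralizer of such a $D$ is strictly larger than the block-diagonal matrices.

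To handle such a block with $i < j$ and $d_i = d_j$, I would exploit the consecutive-block inequality to produce an intermediate index $\ell$ with $i < \ell < j$ and $d_\ell \neq d_i$. For any entry $x_{rc}$ of $X_{ij}$, with $r$ in block $i$ and $c$ in block $j$, choose any $r'$ in block $\ell$; by the ordering of the blocks, $r < r' < c$. Then both $x_{rr'}$ and $x_{r'c}$ are entries of off-diagonal blocks where the scalars already differ, hence vanish, and the $2\times 2$ submatrix
\[
X[\{r,r'\}|\{r',c\}] = \begin{pmatrix} x_{rr'} & x_{rc} \\ x_{r'r'} & x_{r'c} \end{pmatrix} = \begin{pmatrix} 0 & x_{rc} \\ x_{r'r'} & 0 \end{pmatrix}
\]
has determinant $-x_{rc}\, x_{r'r'}$. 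Total nonnegativity of $X$ forces this to be $\geq 0$, Karlin's Theorem~\ref{nontn} guarantees $x_{r'r'} > 0$, and $x_{rc}\geq 0$ since $X$ is TN; together these force $x_{rc} = 0$. A mirror-image choice of indices (picking $\ell$ with $j < \ell < i$) dispatches the case $i > j$.

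Once $X$ has been shown to be block-diagonal, each diagonal block $X_{ii}$ is TN as a submatrix of a TN matrix, and the identity $\det X = \prod_i \det X_{ii}$ combined with invertibility of $X$ forces each $\det X_{ii} > 0$; hence $X_{ii} \in \cS(n_i)$, completing the reverse containment. The entire conceptual weight of the argument lies in the single $2\times 2$ minor computation above, which is the unique point at which the ITN hypothesis is invoked beyond commutation itself.
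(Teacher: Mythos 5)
Your proof is correct and follows essentially the same route as the paper: the forward inclusion via the block-diagonal minor factorization, and the reverse direction by first using $DX=XD$ to kill the blocks $X_{ij}$ with $d_i\neq d_j$, then using total nonnegativity to kill the blocks with $d_i=d_j$ and $|i-j|>1$. The one difference is that the paper simply asserts this last step ("since $X$ is ITN, $X_{ij}=0$ if $d_i=d_j$ but $|i-j|>1$") without justification, whereas you supply the actual $2\times2$ minor computation $\det X[\{r,r'\}\,|\,\{r',c\}]=-x_{rc}x_{r'r'}$ together with Karlin's positivity of the diagonal; this makes your write-up more complete than the published one at precisely the point where the ITN hypothesis is doing work.
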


\begin{proof}
Clearly $\cS(n_1) \oplus \cdots \oplus \cS(n_k)\subseteq C(D)$. For the converse, let $X\in C(D)$. Then $DX = XD$ and $X$ is ITN. Note that $DX = XD$ if and only if 
$X = (X_{ij})$ with $X_{jj} \in \mathbb{R}^{n_j \times n_j}$ for $j \in [k]$ and
$X_{ij} = 0$ whenever $d_i \neq d_j$. Furthermore, since $X$ is ITN, $X_{ij} = 0$ if $d_i = d_j$ but $|i-j|> 1$. Hence $C(D) = \cS(n_1) \oplus \cdots \oplus \cS(n_k)$.
\end{proof}

We now present an outline of the proof strategy for our first main result.

\begin{proof}[Proof-sketch for Theorem \ref{multiplicative}]
By the Cauchy--Binet formula, $TP(n)$ and $ITN(n)$ are closed under multiplication by positive scalars and under conjugation by (anti-)diagonal matrices with positive (anti-)diagonal entries. Thus the implication $(ii) \implies (i)$ is immediate. To show  $(i) \implies (ii)$, Proposition \ref{prop2}, allows us to restrict our attention to $\cS(n)$, and we then proceed by induction on $n$.  We prove the base case $n=2$ in the next section, and using this we complete the proof in Section 5.
\end{proof}
\section{Proof of Theorem \ref{multiplicative} for $n=2$}
In this section, we prove $(i)\implies (ii)$ of Theorem \ref{multiplicative} for $2\times2$ matrices. We begin with a few lemmas that play a crucial role in establishing the main result. As a first step, we characterize the elements of the semigroups $\cU(2)$ and $\cL(2)$.
\begin{lemma}\label{pc4.1}
	Let $A \in \cS(2)$ be a non-diagonal matrix. Then $DAD^{-1} \in C(A)$ for all $D\in \cD(2)$ if and only if   $A =a I_2 + bE_{12}$  or 
	$A =a I_2 + bE_{21}$ for some $a, b >0$.
\end{lemma}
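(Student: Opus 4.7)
The plan is to work with an explicit parameterization of a general $A\in\cS(2)$. By Theorem \ref{nontn}, such an $A$ has positive diagonal entries $a_{11},a_{22}$, and total nonnegativity forces the off-diagonal entries $a_{12},a_{21}$ to be nonnegative (with $a_{11}a_{22}>a_{12}a_{21}$). For $D=\diag(d_1,d_2)\in\cD(2)$, conjugation preserves the diagonal of $A$ and simply rescales the off-diagonal entries by $d_1/d_2$ and $d_2/d_1$, respectively. Hence the hypothesis $DAD^{-1}\in C(A)$ for every $D\in\cD(2)$ is equivalent to the vanishing of the commutator $[A, DAD^{-1}]$ for all $d_1,d_2>0$, which I would expand entry-by-entry.

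For the forward direction, the plan is first to extract the $(1,1)$ entry of the commutator (equivalently the $(2,2)$ entry), which simplifies to a scalar multiple of $a_{12}a_{21}\bigl(d_2/d_1-d_1/d_2\bigr)$. Requiring this to vanish for all $d_1,d_2>0$ forces $a_{12}a_{21}=0$, so by the non-diagonal hypothesis exactly one of $a_{12},a_{21}$ is nonzero. Assume without loss of generality $a_{21}=0$ and $a_{12}>0$. Then the $(1,2)$ entry of the commutator reduces to a positive multiple of $a_{12}(a_{11}-a_{22})(d_1/d_2-1)$; demanding this to vanish for, say, $d_1\neq d_2$ forces $a_{11}=a_{22}$, so $A=aI_2+bE_{12}$ with $a=a_{11}$ and $b=a_{12}$. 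The other case $a_{12}=0$, $a_{21}>0$ proceeds symmetrically from the $(2,1)$ entry of the commutator and yields $A=aI_2+bE_{21}$.

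For the converse, the key observation is that $E_{12}^2=E_{21}^2=0$ in $\mathbb{R}^{2\times 2}$, so every matrix of the form $aI_2+bE_{12}$ lies in the commutative subalgebra $\mathbb{R}[E_{12}]$ (and similarly for $E_{21}$). Since $D(aI_2+bE_{12})D^{-1}=aI_2+(d_1/d_2)bE_{12}$ also lies in $\mathbb{R}[E_{12}]$, it automatically commutes with $A$; the same argument works verbatim for the $E_{21}$ case. Membership of $DAD^{-1}$ in $\cS(2)$ is immediate, since $\cS(2)=ITN(2)$ is closed under conjugation by positive diagonal matrices (a one-line Cauchy--Binet check).

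There is essentially no serious obstacle here: the whole statement reduces to a routine $2\times 2$ commutator computation. The only subtlety worth being careful about is that the conditions must hold for \emph{all} $D\in\cD(2)$, which is what turns what could be a single linear equation into a polynomial identity in the free parameter $d_1/d_2$ and thereby cleanly separates the four entrywise conditions into the two constraints $a_{12}a_{21}=0$ and (in the presence of a nonzero off-diagonal) $a_{11}=a_{22}$.
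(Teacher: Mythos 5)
Your proof is correct and follows essentially the same route as the paper: both arguments reduce to comparing entries of $A\,DAD^{-1}$ and $DAD^{-1}A$ for a diagonal $D$ with $d_1\neq d_2$, extracting $a_{12}a_{21}=0$ from the $(1,1)$ entry and $a_{11}=a_{22}$ from an off-diagonal entry. The only cosmetic difference is in the easy direction, where the paper writes out $C(A)$ explicitly while you invoke the commutative subalgebra $\mathbb{R}[E_{12}]$ (resp.\ $\mathbb{R}[E_{21}]$); these are interchangeable.
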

\begin{proof}
	For the sufficiency, note that 
	$C(A) = \{ x I_2 + yE_{12}: x>0,~ y \ge 0\}$ if $A = a I_2 + b E_{12}$  and $C (A) = \{x I_2 + yE_{21}: x>0,~ y \ge 0\}$ if $A = aI_2 + bE_{21}$. Thus $DAD^{-1}\in C(A)$  for all $D\in \cD(2)$.   To show the necessity, suppose $A=\begin{pmatrix}
		a_{11} & 	a_{12} \\ 	a_{21} & 	a_{22}
	\end{pmatrix} \in \cS(2)$ is a non-diagonal matrix such that $DAD^{-1} \in C(A)$ for all $D\in \cD(2)$. We now claim that $a_{11}=	a_{22}$ and either $ a_{12}=0$ or 	$a_{21}=0$. Let $D=\diag(d_1,d_2)\in \cD(2)$ with $d_1\neq d_2$.  Since $A DAD^{-1}=DAD^{-1}A$, we have
	\[\begin{pmatrix}
		a^2_{11}+\frac{d_2}{d_1}a_{12} a_{21} & a_{12}a_{22}+\frac{d_1}{d_2} a_{11} a_{12}\\ a_{11} a_{21} +\frac{d_2}{d_1}a_{21}a_{22} &  a^2_{22}+\frac{d_1}{d_2}a_{12} a_{21}
	\end{pmatrix}=\begin{pmatrix}
		a^2_{11}+\frac{d_1}{d_2}a_{12} a_{21} & a_{11} a_{12}+  \frac{d_1}{d_2}a_{12}a_{22}\\  a_{21}a_{22}+\frac{d_2}{d_1} a_{11} a_{21} &  a^2_{22}+\frac{d_2}{d_1}a_{12} a_{21}
	\end{pmatrix}.\]
Comparing the $(1,1)$ entries of the above matrices, we have either $a_{12}=0$ or $a_{21}=0$. If $a_{12}\neq 0$, comparing the $(1,2)$ entries of the above matrices, we have $a_{11}=a_{22}$. Otherwise comparing the $(2,1)$ entries of the above matrices, we have $a_{11}=a_{22}$. Thus  $A =a I_2 + bE_{12}$  or 
	$A =a I_2 + bE_{21}$ for some $a, b >0$.
\end{proof}

Using the above result, we now show that if $T$ is an automorphism on $\mathcal S(2)$, then either $T$, or the map $X \mapsto P_2 T(X) P_2$, induces an automorphism on the semigroups $\mathcal U(2)$ and $\mathcal L(2)$.%, where $P_2=\begin{pmatrix}
%0 & 1\\1 & 0
%\end{pmatrix}$.

\begin{lemma}\label{pc4.2}
	Let $T: \cS(2) \to \cS(2)$ be a semigroup isomorphism. Then $ T(\cU(2)) = \cU(2)$ and $ T(\cL(2))=\cL(2)$ or $ T(\cU(2)) = \cL(2)$ and $ T(\cL(2))=\cU(2)$. 
\end{lemma}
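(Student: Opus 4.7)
The plan is to first show that $T$ maps the union $\cU(2) \cup \cL(2)$ bijectively onto itself, and then use a commutativity dichotomy to decide whether $T$ preserves the two pieces individually or swaps them.

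\textbf{Step 1.} By Lemma \ref{pc4.1}, a non-diagonal $A \in \cS(2)$ lies in $\cU(2) \cup \cL(2)$ if and only if $DAD^{-1} \in C(A)$ for every $D \in \cD(2)$. For $A \in \cU(2)$ and $D \in \cD(2)$, applying the semigroup homomorphism $T$ to the identity $A(DAD^{-1}) = (DAD^{-1})A$ and using that $T|_{\cD(2)}$ is a group automorphism (Remark \ref{rem1}(iii)) yields
\[T(A)\bigl(T(D)T(A)T(D)^{-1}\bigr) = \bigl(T(D)T(A)T(D)^{-1}\bigr)T(A).\]
Since $T(D)$ ranges over all of $\cD(2)$ as $D$ does, $T(A)$ satisfies the Lemma \ref{pc4.1} hypothesis; because $A$ is non-diagonal and $T(\cD(2)) = \cD(2)$, $T(A)$ is also non-diagonal. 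Hence $T(A) \in \cU(2) \cup \cL(2)$. Running the same argument for $T^{-1}$ yields $T(\cU(2) \cup \cL(2)) = \cU(2) \cup \cL(2)$.

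\textbf{Step 2.} I would then record the decisive commutativity observation: both $\cU(2)$ and $\cL(2)$ are commutative subsemigroups (since $E_{12}^2 = E_{21}^2 = 0$), whereas for $A = aI+bE_{12} \in \cU(2)$ and $B = a'I+b'E_{21} \in \cL(2)$ a direct computation gives $AB - BA = bb'(E_{11}-E_{22}) \ne 0$. Note that $\cU(2)$ and $\cL(2)$ are disjoint and, by definition, consist entirely of non-scalar matrices.

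\textbf{Step 3.} Fix any $A_0 \in \cU(2)$; by Step 1, $T(A_0)$ lies in exactly one of $\cU(2), \cL(2)$. Suppose $T(A_0) \in \cU(2)$. For any other $A \in \cU(2)$, $A$ commutes with $A_0$, so $T(A)$ commutes with $T(A_0)$; the non-commutation computation in Step 2 rules out $T(A) \in \cL(2)$, forcing $T(A) \in \cU(2)$. Hence $T(\cU(2)) \subseteq \cU(2)$, and an analogous argument starting from a fixed $B_0 \in \cL(2)$ shows that $T(\cL(2))$ is contained entirely in one of $\cU(2), \cL(2)$. The possibility $T(\cL(2)) \subseteq \cU(2)$ is excluded because together with $T(\cU(2)) \subseteq \cU(2)$ it would force $T(\cU(2) \cup \cL(2)) \subseteq \cU(2)$, contradicting the bijection of Step 1 (since $\cL(2)$ is nonempty and disjoint from $\cU(2)$). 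Thus $T(\cL(2)) \subseteq \cL(2)$, and the disjointness of $\cU(2), \cL(2)$ together with Step 1 upgrades both inclusions to equalities. If instead $T(A_0) \in \cL(2)$, the symmetric reasoning yields the swap.

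The main obstacle is really just Step 1, which requires noticing that Lemma \ref{pc4.1} provides a purely semigroup-theoretic characterization of $\cU(2) \cup \cL(2)$ among non-diagonal matrices; once this is in hand, the rest reduces to a clean $2\times 2$ commutator computation and a short bookkeeping argument to rule out the mixed cases.
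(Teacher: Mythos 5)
Your proof is correct and follows essentially the same route as the paper: both arguments first use Lemma \ref{pc4.1} (via the fact that $T$ intertwines $\cD(2)$-conjugation with $\cD(2)$-conjugation) to conclude that $T$ maps $\cU(2)\cup\cL(2)$ onto itself, and then do a short bookkeeping argument to rule out mixing the two pieces. The only difference is in the final step: you rule out mixing by noting that $\cU(2)$ and $\cL(2)$ are each commutative while their elements fail to commute across the two sets, whereas the paper instead observes that a product $T(A)T(B)$ with $T(A)\in\cU(2)$ and $T(B)\in\cL(2)$ would fail to be triangular, contradicting $T(AB)\in\cU(2)\cup\cL(2)$; both variants are equally valid and short.
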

\begin{proof}
	Let $A\in \cU(2)$. We first show that $ T(A)\in \cU(2)$ or $\cL(2)$ using Lemma \ref{pc4.1}. Let $D\in \cD(2)$. Since $T(\cD(2))=\cD(2)$, $D=T(\hat D)$ for some $\hat D\in \cD(2)$ and $T(A)$ is a non-diagonal matrix. Thus $DT(A)D^{-1}=T(\hat D A {\hat D}^{-1})\in T(C(A))$,  by Lemma \ref{pc4.1}. Since $ T(C(A))=C(T(A))$,  $T(A)\in \cU(2)$ or $\cL(2)$. Similarly, we can show that  if $A\in \cL(2)$ then $T(A)\in \cU(2)$ or $\cL(2)$. Since $T$ is an automorphism on $S(2)$, $T^{-1}(A)\in \cU(2)$ or $\cL(2)$, if $A\in \cU(2)$ or $\cL(2)$. 
	
	Let $A\in \cU(2)$ and without loss of generality assume that $T(A)\in \cU(2)$. We next claim that $T(\cU(2))= \cU(2)$ and $T(\cL(2))= \cL(2)$. If $T(\cU(2)) \nsubseteq \cU(2)$, then there exists $B\in \cU(2)$ such that $T(B)\in \cL(2)$. This implies $AB \in \cU(2)$ but $T(AB)=T(A) T(B)$ is not triangular, a contradiction. Thus $ T(\cU(2)) \subseteq \cU(2)$.  Similarly, if $B \in \cL(2)$ and $T(B)\in \cU(2)$ then $T(AB)\in \cU(2)$ but $AB$ is not triangular, a contradiction. Thus  $T(\cL(2))\subseteq \cL(2)$. Again, by a similar argument one can conclude that $ T^{-1}(\cU(2)) \subseteq \cU(2)$ and  $T^{-1}(\cL(2))\subseteq \cL(2)$. Hence  $T(\cU(2))= \cU(2)$ and $T(\cL(2))= \cL(2)$. This completes the proof.
\end{proof}

Let $T: \cS(2) \to \cS(2)$ be a semigroup isomorphism. By the above lemma, we may replace $T$ with the map $X \mapsto P_2 T(X) P_2$ if needed, and henceforth \textbf{assume} that
\begin{equation}\label{eqwlog2}
T(\cU(2)) = \cU(2)\quad \text{and} \quad  T(\cL(2))=\cL(2).
\end{equation}
 We next show that $T$ induces an  automorphism on the semigroups $\cU_0(2)$ and $\cL_0(2)$.
\begin{lemma}\label{pc4.3}
	Let $T: \cS(2) \to \cS(2)$ be a semigroup isomorphism satisfying \eqref{eqwlog2}. Then $ T(\cU_0(2)) = \cU_0(2)$ and $ T(\cL_0(2))=\cL_0(2)$. 
\end{lemma}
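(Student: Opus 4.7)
The plan is to detect the subsemigroup $\cU_0(2)$ inside $\cU(2)$ by exploiting the interaction of $T$ with conjugation by diagonal matrices, and then use the subsemigroup structure of $\cU_0(2)$ itself to nail down the remaining scalar coefficient. By \eqref{eqwlog2} and Remark~\ref{rem1}(iii), each $T(I + bE_{12})$ with $b>0$ has the form $aI + cE_{12}$ with $a,c>0$, and each $T(D)$ with $D \in \cD(2)$ is again diagonal. The goal is therefore to show that $a=1$ for every choice of $b$.

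The key step is to apply $T$ to the trivial associativity identity $XD = D\cdot(D^{-1}XD)$ for $X = I+bE_{12} \in \cU_0(2)$ and $D = \diag(d_1,d_2) \in \cD(2)$. Since $D^{-1}XD = I + (bd_2/d_1)E_{12}$ lies again in $\cU_0(2)$, applying $T$ and comparing the $(1,1)$-entries of the two resulting expressions for $T(XD)$ will force the scalar coefficient of $T(X)$ to equal that of $T(D^{-1}XD)$. Because $d_2/d_1$ ranges over all of $(0,\infty)$, this will show that the scalar coefficient of $T(Y)$ is a common positive constant $c_0$ as $Y$ ranges over $\cU_0(2)$. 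Then, for any $Y_1,Y_2\in\cU_0(2)$, writing out $T(Y_1)T(Y_2) = T(Y_1Y_2)$ in $\cU(2)$ and comparing scalar coefficients will give $c_0^2 = c_0$, forcing $c_0 = 1$. This yields $T(\cU_0(2)) \subseteq \cU_0(2)$.

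An entirely symmetric computation --- with $\cU$ replaced by $\cL$, $E_{12}$ by $E_{21}$, and the identity $YD = D\cdot(D^{-1}YD)$ --- will give $T(\cL_0(2))\subseteq \cL_0(2)$. The reverse inclusions will follow by rerunning the same argument on $T^{-1}$, which also satisfies the hypotheses \eqref{eqwlog2} of the lemma, thereby producing the desired equalities. The main obstacle I anticipate is spotting the right semigroup identity that exposes $T$'s compatibility with $\cD(2)$-conjugation; once the identity $XD = D(D^{-1}XD)$ is used, the remaining matrix calculation is short and the subsemigroup structure of $\cU_0(2)$ finishes the proof with almost no extra work.
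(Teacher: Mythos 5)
Your proof is correct and rests on the same two key ideas as the paper's: exploiting diagonal conjugation inside $\cU(2)$ and multiplicativity of $T$ within $\cU_0(2)$ to kill the scalar coefficient. The paper compresses both into the single identity $(I+xE_{12})^2 = \diag(1,\tfrac12)(I+xE_{12})\diag(1,2)$ and compares $(1,1)$-entries directly, whereas you decouple them — first showing via $XD = D(D^{-1}XD)$ that the scalar part of $T$ is constant on $\cU_0(2)$, then using $T(Y_1)T(Y_2)=T(Y_1Y_2)$ to force that constant to be $1$ — but this is a cosmetic reorganization of the same argument, and your treatment of the reverse inclusion via $T^{-1}$ matches the paper as well.
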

\begin{proof}
	By \eqref{eqwlog2}, $T(\cU(2)) = \cU(2)$ and $ T(\cL(2))=\cL(2)$. 
	 To show $T(\cU_0(2)) = \cU_0(2)$, let $x>0$. Then $A= \begin{pmatrix}
	 	1 & x \\ 0 & 1
	 \end{pmatrix}\in \cU_0(2)$ and $T(A)= \begin{pmatrix}
		a & a_{12} \\ 0 & a
	\end{pmatrix}$ for some $a, a_{12}>0$, since $\cU_0(2) \subset \cU(2)$.  We claim that $a=1$. Note that
	\begin{equation}\label{eq1}
		 \begin{pmatrix}
			1 & x \\ 0 & 1
		\end{pmatrix} \begin{pmatrix}
		1 & x \\ 0 & 1
		\end{pmatrix}=  \begin{pmatrix}
		1 & 2x \\ 0 & 1
		\end{pmatrix} = \begin{pmatrix}
			1 & 0 \\ 0 & 1/2
		\end{pmatrix} \begin{pmatrix}
		1 & x \\ 0 & 1
		\end{pmatrix}\begin{pmatrix}
			1 & 0 \\ 0 & 2
		\end{pmatrix}
	\end{equation}
	Since $T$ induces an automorphism on the group $\cD(n)$, $T\left(\begin{pmatrix}
		1 & 0 \\ 0 & 2
	\end{pmatrix}\right)=\begin{pmatrix}
		d_1 & 0 \\ 0 & d_2
	\end{pmatrix}$ for some $d_1,d_2>0$.  Applying $T$ on both sides of \eqref{eq1}, we have 
	\[\begin{pmatrix}
		a^2 & 2a a_{12} \\ 0 & a^2
	\end{pmatrix}=\begin{pmatrix}
		a & \frac{d_2}{d_1}a_{12} \\ 0 & a
	\end{pmatrix}.\] Thus $a=1$ and $T(\cU_0(2)) \subseteq \cU_0(2)$.  Since $T$ bijective, by an analogous argument we can show that  $T^{-1}(\cU_0(2)) \subseteq \cU_0(2)$. Thus  $T(\cU_0(2)) = \cU_0(2)$.
	
	Similarly, one can show that $T(\cL_0(2)) = \cL_0(2)$.  This concludes the proof.
\end{proof}

Using the above lemmas, we are now ready to prove the implication $(i)\implies (ii)$ of Theorem \ref{multiplicative} in the $2\times2$ case.

\begin{proof}[Proof of Theorem \ref{multiplicative}$(i)\implies (ii)$ for the $2\times2$ case]
Replacing $T$ by the map $X \mapsto P_2 T(X) P_2$ if necessary, we may assume that $T$ satisfies \eqref{eqwlog2}.	By Lemma \ref{pc4.3}, we have $ T(\cU_0(2)) = \cU_0(2)$ and $ T(\cL_0(2)) = \cL_0(2)$. Then $ T(I + xE_{12}) = I + f(x)E_{12}$ for all $x >0$, where $f$ is a bijective map on $(0,\infty)$. We may assume $f(1)=1$, otherwise replace $ T$ by the map $A \mapsto \begin{pmatrix}
		\frac{1}{ f(1)} & 0 \\ 0 & 1
	\end{pmatrix}T(A) \begin{pmatrix}
		f(1) & 0 \\ 0 & 1
	\end{pmatrix}$.  We now show that $f$ is additive on $(0,\infty)$. Let $a_1,a_2>0$. Using the identity $ \begin{pmatrix}
	1 & a_1+a_2 \\ 0 & 1
	\end{pmatrix}= \begin{pmatrix}
	1 & a_1 \\ 0 & 1
	\end{pmatrix}  \begin{pmatrix}
	1 & a_2 \\ 0 & 1
	\end{pmatrix}$, we have
	\begin{eqnarray*}
		T(I+(a_1+a_2)E_{12}) &=&(I+f(a_1)E_{12})(I+f(a_2)E_{12}) \\
		&=& I + (f(a_1)+f(a_2)) E_{12}.
	\end{eqnarray*}
	Thus $f(a_1+a_2)=f(a_1)+f(a_2)$ for all $a_1,a_2>0$.  We next show that $f$ is multiplicative on $(0,\infty)$.
	Since $T(\cD(2))=\cD(2)$,
	
	\[T\left(\begin{pmatrix}
		x & 0 \\ 0 & 1/x
	\end{pmatrix}\right)=\begin{pmatrix}
		h_1(x) & 0 \\ 0 & h_2(x)
	\end{pmatrix},\]
	for all $x>0$. Then $h_1$ and $h_2$ are multiplicative on $(0,\infty)$. Note that 
	
	\begin{equation}\label{eq2}
		\begin{pmatrix}
			x & 0 \\ 0 & 1/x
		\end{pmatrix} \begin{pmatrix}
			1 & 1 \\ 0 & 1
		\end{pmatrix}\begin{pmatrix}
			1/x & 0 \\ 0 & x
		\end{pmatrix}= \begin{pmatrix}
			1 & x^2 \\ 0 & 1
		\end{pmatrix}.
	\end{equation}
Applying $T$ on both sides of \eqref{eq2}, we have 
	\[\begin{pmatrix}
		1 & h_1(x)/h_2(x) \\ 0 & 1
	\end{pmatrix}=\begin{pmatrix}
		1 & f(x^2) \\ 0 & 1
	\end{pmatrix}.\]
	Thus 
	\begin{equation}\label{eqnf}
		f(x)=\sqrt{\frac{h_1(x)}{h_2(x)}} \hbox{  for all  } x>0.
	\end{equation}Since $h_1$ and $h_2$ are multiplicative on $(0,\infty)$, $f$ is also multiplicative on $(0,\infty)$.  Thus $f$ is bijective, additive and multiplicative on $(0,\infty)$, and so can be extended to  a field automorphism on $\IR$ preserving positive numbers. Hence $f(x) = x$ for all $x > 0$.
	
We next show that $h_1(x)=x$ and $h_2(x)=\frac{1}{x}$ for all $x>0$. Let $x>1$.  Again, note that 
	\begin{equation}\label{eq3}
		\begin{pmatrix}
			1 & 0 \\ 1 & 1
		\end{pmatrix} \begin{pmatrix}
			1 & x^2-1 \\ 0 & 1
		\end{pmatrix}\begin{pmatrix}
			x & 0 \\ 0 & 1/x
		\end{pmatrix}= 
		\begin{pmatrix}
			1/x & 0 \\ 0 & x
		\end{pmatrix} \begin{pmatrix}
			1 & x^2-1 \\ 0 & 1
		\end{pmatrix}\begin{pmatrix}
			1 & 0 \\ 1 & 1
		\end{pmatrix}.
	\end{equation}
	Since $ T(\cL(2)) = \cL(2)$, $ T(I + aE_{21}) = I + g(a)E_{21}$ for all $a >0$, where $g$ is a bijective map on $(0,\infty)$.  Applying $T$ on both sides of \eqref{eq3}, we have 
	
	\begin{equation}
		\begin{pmatrix}
			1 & 0 \\ g(1) & 1
		\end{pmatrix} \begin{pmatrix}
			1 & x^2-1 \\ 0 & 1
		\end{pmatrix}\begin{pmatrix}
			h_1(x) & 0 \\ 0 & h_2(x)
		\end{pmatrix}= 
		\begin{pmatrix}
			\frac{1}{h_1(x)} & 0 \\ 0 & \frac{1}{h_2(x)}
		\end{pmatrix} \begin{pmatrix}
			1 & x^2-1 \\ 0 & 1
		\end{pmatrix}\begin{pmatrix}
			1 & 0 \\ g(1) & 1
		\end{pmatrix}.
	\end{equation}
	
	Thus
	\begin{equation}\label{eq4}
		\begin{pmatrix}
			h_1(x) & (x^2-1)h_2(x)\\h_1(x)g(1) & (x^2-1)h_2(x)g(1)+h_2(x)
		\end{pmatrix}= \begin{pmatrix}
			\frac{1}{h_1(x)}+\frac{(x^2-1)g(1)}{h_1(x)} & \frac{(x^2-1)}{h_1(x)} \\\frac{g(1)}{h_2(x)} & \frac{1}{h_2(x)}
		\end{pmatrix}.
	\end{equation}
Since $g(1)>0$, comparing the $(2,1)$ entries of the above matrices, we have $h_1(x)h_2(x)=1$ for all $x>1$.  If $x<1$, we can do the above analysis for $y=1/x$ and conclude that $h_1(y)h_2(y)=1$. Since $h_1$ and $h_2$ are multiplicative, $\frac{1}{h_1(x)h_2(x)}=1$. Thus $h_1(x)h_2(x)=1$ for all $x>0$.  From \eqref{eqnf}, we have $h_1(x)=f(x)=x$ and $h_2(x)=\frac{1}{x}$ for all $x>0$.  %Hence $T(\cD_0(2))=\cD_0(2)$, since $T$ is bijective.
	
Next we show that $g(x)=x$ for all $x>0$. Taking $x=2$ and looking at the first entries on both sides of \eqref{eq4}, we have $g(1)=1$. Thus $T(I+E_{21})=I+E_{21}$. Now by a similar argument as for $f$, we can show that $g$ is additive, multiplicative and bijective on $(0,\infty)$. Thus $g(x)=x$ for all $x>0$. This implies that for all $x>0$,

\begin{equation}\label{pnceq2by2}
 T\left(\begin{pmatrix}
	1 & x \\ 0 & 1
\end{pmatrix}\right)=\begin{pmatrix}
	1 & x \\ 0 & 1
\end{pmatrix},\quad T\left(\begin{pmatrix}
1 & 0 \\ x & 1
\end{pmatrix}\right)=\begin{pmatrix}
1 & 0 \\ x & 1
\end{pmatrix},\quad T\left(\begin{pmatrix}
x & 0 \\ 0 &\frac{1}{x}
\end{pmatrix}\right)=\begin{pmatrix}
x & 0 \\ 0 & \frac{1}{x}
\end{pmatrix}. \end{equation}
By the Whitney bidiagonal factorization, every $A\in\cS_0(2)$ can be written as $A=LDU$, where $L\in \cL_0(2)$, $U\in \cU_0(2)$, and $D\in \cD_0(2)$.  Hence, using \eqref{pnceq2by2}, we have
\[T(A)=A \text{ for all } A \in \cS_0(2).\]
%This implies that $ T( \cS_0(2))\subseteq\cS_0(2)$, by Whitney bidiagonal factorization. Since $T$ is invertible, similarly one can show that $T^{-1}( \cS_0(2))\subseteq\cS_0(2)$. Thus $ T( \cS_0(2))=\cS_0(2)$.
 Notice that we have used the maps $X \mapsto P_2T(X)P_2$ and $X \mapsto DT(A)D^{-1}$ to conclude that $T(A)=A$ for all $A \in \cS_0(2)$, where $D$ is a diagonal matrix with positive diagonal entries. Moreover, all these maps preserve the determinant.

 Now, let $A \in \cS(2)$ be arbitrary. Then $A$ can be written as $A = ((\det A)^{\frac{1}{2}} I_2)A_0$ with $\det A_0 = 1$. Thus
	\begin{eqnarray*}
		T(A) &=& T((\det A)^{\frac{1}{2}} A_0)\\
		&=&= \gamma((\det A)^{\frac{1}{2}})  T(A_0)\\
		&=&= \gamma((\det A)^{\frac{1}{2}})  A_0\\
		&=&\mu(\det A)(\det A)^{-\frac{1}{2}} A ,
	\end{eqnarray*}
where $\mu(x):=\sqrt{\gamma(x)}$ is a bijective multiplicative map on $(0,\infty)$. Hence  for all $A\in\cS(2)$,
\[T(A)=\mu(\det A)(\det A)^{-\frac{1}{2}} RAR^{-1}\]
for some bijective  multiplicative map $\mu:(0,\infty) \to (0,\infty)$ and $R = \begin{pmatrix}
	r_1 & 0 \\ 0 & r_2
\end{pmatrix}$ or $\begin{pmatrix}
0 & r_1 \\ r_2 & 0
\end{pmatrix}$ with $r_1,r_2>0$. This concludes the proof for the $2 \times 2$ case.
\end{proof}

\section{Proof of Theorem \ref{multiplicative} for $n\geq3$}
In this section, we conclude the proof of Theorem \ref{multiplicative}  and also prove Theorem \ref{multiplicative2}.  Throughout this section we assume $n\geq 3$, unless stated otherwise. To apply the induction hypothesis, we  first show that if $T:\cS(n)\to \cS(n)$ is an isomorphism then either $T$ or the map $X \mapsto P_nT(X)P_n$ induces automorphisms on several submonoids and subsemigroups of $\cS(n)$. To continue our discussion, we set some basic notations.

\begin{notation}
	Given integers $1\leq k \leq n-1$, define the diagonal matrix $D_k$ to have $(j,j)$ entry $j$ if $j \neq k+1$, and $(k+1,k+1)$ entry $k$.
\end{notation}
\begin{rem}
More generally, in the rest of this section, one can work with any diagonal matrices $D_1, \dots, D_{n-1}$, where each $D_k$ has diagonal entries $d_{k,j} > 0$ such that $d_{k,k} = d_{k,k+1}$ but all other pairs of consecutive diagonal entries in $D_k$ are unequal.
\end{rem}

%\begin{defn}
%Define $D_1, \dots, D_{n-1} \in \cD(n)$ such that for each $k\in [n-1]$, $D_k=\diag(d_{k,1},\ldots, d_{k,n})$ with $d_{k,j}= d_{k, {j+1}}$ only if $j=k$.
%\end{defn}

By Lemma \ref{CDk-0}, the centralizer of $D_k$ with respect to $\cS(n)$ is $C(D_k) = \{ F_1 \oplus X \oplus F_2 \in \cS(n): 
F_1\in \cD(k-1), F_2\in \cD(n-k-1), X \in \cS(2)\}$ for $k\in [n-1]$. We next provide a characterization of the  submonoid  $C(D_k)$ for all $k\in[n-1]$.
	
\begin{lemma}\label{CDk-01} Let $D \in \cD_n$ such that $C(D)\neq \cD(n)$. Then $C(D) = C(D_k)$ for some $k \in[n-1]$ if and only if  there is no $\hat D \in \cD(n)$ such that 
	$\cD(n) \subsetneq C(\hat D) \subsetneq C(D)$. Moreover, if $T:\cS(n)\to \cS(n)$ is an isomorphism and $i \in [n-1]$, then $T(C(D_i)) = C(D_j)$ for some $j\in [n-1]$.
\end{lemma}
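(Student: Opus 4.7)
The plan is to translate both claims into combinatorics of block partitions via Lemma \ref{CDk-0}. For any $E \in \cD(n)$, writing $E = e_1 I_{m_1} \oplus \cdots \oplus e_l I_{m_l}$ with $e_j \neq e_{j+1}$, we have $C(E) = \cS(m_1) \oplus \cdots \oplus \cS(m_l)$; in particular $C(E) = \cD(n)$ iff every $m_j = 1$, while $C(E) = C(D_k)$ iff the partition $(m_1, \dots, m_l)$ has exactly one part equal to $2$ (covering positions $k, k+1$) and all other parts equal to $1$. The key observation, readily checked from the block-direct-sum formula, is that $C(\hat D) \subseteq C(D)$ for two diagonal matrices $\hat D, D$ iff the block partition of $\hat D$ refines that of $D$, in the sense that every block of $\hat D$ sits inside some block of $D$.

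For the biconditional, first assume $C(D) = C(D_k)$. Any $\hat D \in \cD(n)$ with $C(\hat D) \subseteq C(D_k)$ has its partition refining $(1, \dots, 1, 2, 1, \dots, 1)$, so either $\hat D$ leaves the size-$2$ block intact (giving $C(\hat D) = C(D_k)$) or splits it into two singletons (giving $C(\hat D) = \cD(n)$); no strictly intermediate centralizer exists. Conversely, suppose $C(D) \neq \cD(n)$ but $D$ is not of the special form; then its partition either contains some $m_j \geq 3$ or contains two parts $m_j, m_l \geq 2$. In the first case, refine the size-$m_j$ block into a size-$2$ sub-block followed by $m_j - 2$ singletons; in the second case, split one of the two large blocks completely into singletons. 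Assigning fresh positive scalars distinct from their neighbors produces a $\hat D \in \cD(n)$ whose block decomposition is precisely the intended refinement, and so $\cD(n) \subsetneq C(\hat D) \subsetneq C(D)$ by the key observation.

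For the moreover statement, note that the characterization just established is purely lattice-theoretic: it is phrased only in terms of $\cD(n)$, the family of centralizers in $\cS(n)$, and set-theoretic inclusion. A semigroup isomorphism $T \colon \cS(n) \to \cS(n)$ automatically satisfies $T(C(A)) = C(T(A))$ and preserves inclusions, and by Remark \ref{rem1}(iii) maps $\cD(n)$ bijectively onto itself. Consequently $T$ takes each minimal non-$\cD(n)$ centralizer of a diagonal matrix to another, and hence $T(C(D_i)) = C(T(D_i)) = C(D_j)$ for some $j \in [n-1]$.

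The main subtle point I expect to have to justify carefully is the equivalence of inclusion $C(\hat D) \subseteq C(D)$ with partition refinement, and the fact that in the reverse direction of the biconditional one can always choose the new diagonal entries of $\hat D$ positive and distinct from their neighbors so that Lemma \ref{CDk-0} yields exactly the intended block-direct-sum decomposition of $C(\hat D)$.
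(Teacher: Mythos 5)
Your proposal is correct and follows essentially the same strategy as the paper's: both translate centralizer containments into refinement of the block partition furnished by Lemma~\ref{CDk-0}, characterize $C(D_k)$ as the minimal non-$\cD(n)$ centralizers of diagonal matrices, and then transfer this purely lattice-theoretic characterization through $T$ using $T(C(A)) = C(T(A))$ and $T(\cD(n)) = \cD(n)$. The paper's version is terser (it never isolates the refinement criterion as a named lemma, and proves the ``moreover'' part by contradiction rather than by invariance), but the underlying ideas and case analysis are the same.
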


\begin{proof}
	Suppose $D \in \cD_n$ such that $C(D) = C(D_k)= \{F_1 \oplus X \oplus F_2: 
	F_1\in \cD(k-1), F_2\in \cD(n-k-1), X \in \cS(2)\}$ for some $k \in [n-1]$.  Let $\hat D=\diag( \hat{d_1},\ldots, \hat{d_n}) \in \cD(n)$. If $C(\hat D)$  is a proper subset of $C(D)$ then $\hat{d}_i\neq \hat{d}_{i+1}$ for all $i \in [n-1]$ and hence $C(\hat D)= \cD(n)$. Thus  there is no $\hat D \in \cD(n)$ such that $\cD(n) \subsetneq C(\hat D) \subsetneq C(D)$.
	
	For the converse, let $D=\diag(d_1,\ldots,d_n) \in \cD_n$  and assume that  there is no $\hat D \in \cD(n)$ such that 
	$\cD(n) \subsetneq C(\hat D) \subsetneq C(D)$.  Since $C(D)\neq \cD(n)$, there exists 
	$k \in [n-1]$ such that $d_k=d_{k+1}$. Moreover, the fact that there is no $\hat D \in \cD(n)$ with
	$\cD(n) \subsetneq C(\hat D) \subsetneq C(D)$ ensures that no other $i \in [n-1]$ satisfies $d_i=d_{i+1}$. Thus  $C(D) = C(D_k)$.

	For the second part, fix $1\leq i\leq n-1$ and  claim that  $T(C(D_i)) = C(D_j)$ for some $j \in[n-1]$.  If not, then there exists $\hat D \in \cD(n)$ such that $\cD(n) \subsetneq C(\hat D) \subsetneq T(C(D_i))$, since $T(C(D_i))\neq \cD(n)$ by Remark \ref{rem1}. Since $T$ is an automorphism on $\cS(n)$, $C\left(T^{-1}(\hat D)\right)=T^{-1}(C(\hat D))$ and by Remark \ref{rem1}, $T(\cD(n)) =  \cD(n)$. This implies that $\cD(n) \subsetneq C\left(T^{-1}(\hat D)\right)  \subsetneq C(D_i)$, a contradiction by the first part. Thus  $T(C(D_i)) = C(D_j)$ for some  $j \in [n-1]$.
\end{proof}

By the preceding lemma, any automorphism $T$ of $\cS(n)$ maps $C(D_i)$ onto  $C(D_j)$ for some $j$. We now show that this correspondence is highly constrained. Specifically, we show that $T(C(D_i))$ is either $C(D_i)$ or $C(D_{n-i})$ for all $i \in [n-1]$. To proceed, we introduce a semigroup of  $C(D_i)$ that underpins the subsequent argument.  For $k\in [n-1]$, define the set
\begin{equation}\label{sgrpC_k}
	C_k:=\{aI_{k-1} \oplus X \oplus bI_{n-k-1} \in \cS(n): a,b > 0,  X \in \cS(2)\setminus\cD(2)\}.\end{equation}
It is easy to verify that $C_k$ is a semigroup in $\cS(n)$ for all $k\in[n-1]$. We now list a basic property of the elements of $C_i$ for all $i\in[n-1]$. The proof is omitted, as it is a straightforward verification.

\begin{lemma}\label{pc-3}
	Let $n \ge 4$.  Then all matrices in $C_i$ and $C_j$ commute if and only if $|i-j| > 1$. If instead $|i-j| =1$, not all matrices in $C_i$ and $C_j$ commute.
\end{lemma}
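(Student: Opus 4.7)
The lemma has two directions, both of which reduce to direct block-matrix calculations. The plan is to handle the sufficient direction by refining the block structure until one of the two factors is scalar in every block, and to handle the necessary direction by producing explicit non-commuting witnesses.

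For sufficiency ($|i-j|>1$), I may assume without loss of generality that $j \geq i+2$. Any $M = aI_{i-1} \oplus X \oplus bI_{n-i-1} \in C_i$ and $N = a'I_{j-1} \oplus Y \oplus b'I_{n-j-1} \in C_j$ can be simultaneously re-blocked using the common block-size pattern $(i-1,\,2,\,j-i-1,\,2,\,n-j-1)$; in this finer partition $M$ has diagonal blocks $(aI_{i-1},\,X,\,bI_{j-i-1},\,bI_2,\,bI_{n-j-1})$ while $N$ has diagonal blocks $(a'I_{i-1},\,a'I_2,\,a'I_{j-i-1},\,Y,\,b'I_{n-j-1})$. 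In each of the five positions, at least one of the two corresponding blocks is a scalar multiple of the identity, so $MN$ and $NM$ agree block-by-block and therefore $MN=NM$. (When $i=1$ or $j=n-1$, the extremal block of size $i-1$ or $n-j-1$ is simply empty; the argument is unchanged.)

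For necessity I would exhibit an explicit non-commuting pair. When $|i-j|=1$, say $j=i+1$, take $M = I_n + E_{i,i+1} \in C_i$ (corresponding to $a=b=1$ and central $2\times 2$ block $I_2+E_{12}$) and $N = I_n + E_{i+1,i+2} \in C_{i+1}$ (analogously). Using the identities $E_{i,i+1}E_{i+1,i+2} = E_{i,i+2}$ and $E_{i+1,i+2}E_{i,i+1} = 0$, a one-line computation yields $MN - NM = E_{i,i+2} \ne 0$. If one also wishes to record the $i=j$ case (covered by the iff since $0 \not> 1$), one picks $M, N \in C_i$ with central blocks $I_2 + E_{12}$ and $I_2 + E_{21}$ respectively; the elementary fact that $(I_2+E_{12})(I_2+E_{21}) \ne (I_2+E_{21})(I_2+E_{12})$ immediately gives $MN \ne NM$.

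I anticipate no real obstacle. The only points requiring mild care are the notational bookkeeping in the five-block refinement (and, in the extremal cases, accounting for empty boundary blocks), together with checking that the chosen witnesses genuinely sit in the prescribed semigroups; but $I_2 + E_{12}$ and $I_2 + E_{21}$ visibly lie in $\cS(2) \setminus \cD(2)$, and the bidiagonal matrices $I_n + E_{k,k+1}$ are manifestly in $\cS(n)$, so this check is immediate.
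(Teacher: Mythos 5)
Your argument is correct in substance, and since the paper omits its own proof of this lemma as ``a straightforward verification,'' there is no alternative approach to compare against. The strategy---refine the two block structures to a common partition in which, at every position, at least one factor is a scalar block (for $|i-j|>1$), and exhibit explicit non-commuting elementary bidiagonal witnesses (for $|i-j|\le 1$)---is exactly what one would do, and your witnesses $M=I_n+E_{i,i+1}\in C_i$, $N=I_n+E_{i+1,i+2}\in C_{i+1}$ with $MN-NM=E_{i,i+2}\neq 0$ settle the necessity cleanly; the $i=j$ case via $I_2+E_{12}$ and $I_2+E_{21}$ is also handled.

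One arithmetic slip worth fixing: the middle block of your common refinement should have size $j-i-2$, not $j-i-1$. The nontrivial $2\times 2$ block of $M$ occupies rows and columns $i,i+1$, and that of $N$ occupies $j,j+1$; the gap between them is $\{i+2,\dots,j-1\}$, of cardinality $j-i-2$. (As stated, your block sizes sum to $n+1$ rather than $n$.) When $j=i+2$ this middle block is empty, exactly as the extremal blocks are empty when $i=1$ or $j=n-1$; the argument is unaffected in all cases.
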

 We now provide a characterization of the matrices in $C_k$ which will aid in the classification of $T(C_k)$. 
\begin{lemma}\label{pc-05}
	Let  $X \in C(D_k)$  be a non-diagonal matrix.  Then $X$ is not of the form 
	$aI_{k-1} \oplus Z \oplus bI_{n-k-1}$ if and only if  $C(X)$ is a proper subset of $C(Y)$ for some non-diagonal $Y\in C(D_k)$.
\end{lemma}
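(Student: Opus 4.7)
The plan is to establish both implications by analyzing the block structure inherited from $C(D_k)=\cD(k-1)\oplus\cS(2)\oplus\cD(n-k-1)$. I will write any matrix in $C(D_k)$ in the block form $F_1\oplus Z\oplus F_2$, noting that non-diagonality forces $Z\in\cS(2)\setminus\cD(2)$. For $W\in\cS(n)$ partitioned conformably as $(W_{ij})_{i,j\in\{1,2,3\}}$, the equation $WX=XW$ reduces to the nine block equations $W_{ij}X_{jj}=X_{ii}W_{ij}$; these, combined with the ITN positivity constraint on $W$ (which imposes nonnegativity on entries and so restricts the admissible eigenvector columns in the off-diagonal blocks), determine $C(X)$.

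For the reverse direction I argue by contrapositive: assume $X=aI_{k-1}\oplus Z\oplus bI_{n-k-1}\in C_k$ and let $Y=G_1\oplus Z'\oplus G_2\in C(D_k)\setminus\cD(n)$ satisfy $C(X)\subseteq C(Y)$; the goal is to show $C(Y)=C(X)$. The key observation is that $C(X)$ contains every $G\oplus I_2\oplus I_{n-k-1}$ with $G\in\cS(k-1)$, every $I_{k-1}\oplus M\oplus I_{n-k-1}$ with $M\in C_{\cS(2)}(Z)$, and every $I_{k-1}\oplus I_2\oplus H$ with $H\in\cS(n-k-1)$ (each is easily seen to commute with $X$ because the relevant outer blocks of $X$ are scalar). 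Requiring each to commute with $Y$ forces $G_1$ to commute with all of $\cS(k-1)$; since $TP(k-1)$ is nonempty and open in $M_{k-1}(\mathbb{R})$, its real linear span is all of $M_{k-1}(\mathbb{R})$, forcing $G_1=a'I_{k-1}$. Symmetrically $G_2=b'I_{n-k-1}$, and $Z'$ commutes with $Z$, hence $Z'\in\mathbb{R}[I,Z]$. A parallel block analysis for $Y$ then yields $C(Y)=C(X)$.

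For the forward direction, assume $X=F_1\oplus Z\oplus F_2$ is non-diagonal but not of the special form, so at least one of $F_1,F_2$ is non-scalar; the two cases being symmetric, suppose $F_2$ is non-scalar. My candidate is $Y:=F_1\oplus Z\oplus bI_{n-k-1}$, with $b>0$ selected according to the spectral data of $F_1,F_2,Z$. Strict inclusion $C(X)\subsetneq C(Y)$ is witnessed by $W:=I_{k-1}\oplus I_2\oplus H$ for any $H\in\cS(n-k-1)\setminus C_{\cS(n-k-1)}(F_2)$, which exists because $F_2$ is non-scalar. The inclusion $C(X)\subseteq C(Y)$ is then verified block by block: the conditions on $W_{11},W_{12},W_{21},W_{22}$ are identical between $C(X)$ and $C(Y)$ since $Y$ preserves $F_1$ and $Z$; the condition on $W_{33}$ weakens from commuting with $F_2$ to commuting with $bI_{n-k-1}$ (automatic); and the off-diagonal conditions on $W_{13},W_{31},W_{23},W_{32}$ are handled using the ITN-induced restriction on admissible eigenvector columns of $Z$ (essentially confining them, for irreducible $Z$, to the Perron direction).

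The main obstacle is precisely the off-diagonal block analysis in the forward direction when entries of $F_1$ or $F_2$ coincide with eigenvalues of $Z$: in such configurations a naive scalarization can fail the inclusion because $W\in C(X)$ may have nonzero entries in $W_{13},W_{23},W_{31},W_{32}$ that violate the corresponding eigenvalue condition for $Y$. I expect to resolve this by tuning $b$ to match the relevant eigenvalue (e.g.\ $b=\lambda_1$, the Perron eigenvalue of $Z$, if it appears as an entry of $F_2$), or, in more delicate scenarios, by replacing full scalarization of $F_2$ with a partial coarsening that merges only entries not coinciding with spectral data of $F_1$ or $Z$; the interplay between the commutation equations and the ITN nonnegativity constraint is the technical heart of the argument.
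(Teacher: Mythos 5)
Your proof plan takes essentially the same route as the paper's: decompose $C(D_k)$ into three blocks, use commutation tests to force scalarity of the outer blocks of $Y$, and appeal to $2\times 2$ centralizer structure for the middle block. You are, in fact, more careful than the paper in two spots: in the converse you rely only on the easy inclusion $\cS(k-1)\oplus C_{\cS(2)}(Z)\oplus \cS(n-k-1)\subseteq C(X)$ rather than the paper's asserted equality, and in the forward direction you correctly flag that a naive choice of scalar $b$ can break the inclusion $C(X)\subseteq C(Y)$ and that $b$ must be tuned to the spectral data. Your forward-direction modification (scalarizing only $F_2$) is also a legitimate variant of the paper's choice.

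However, both your plan and the paper's proof share a fatal gap in the converse direction, and in this case the gap cannot be patched: the ``if'' direction of the lemma is false as stated. Having shown $Y = a'I\oplus Z'\oplus b'I$ with $Z'\in\mathbb{R}[I,Z]$, you invoke ``a parallel block analysis'' to conclude $C(Y)=C(X)$, while the paper simply asserts $C(X)=\cS(k-1)\oplus C(Z)\oplus \cS(n-k-1)$. Neither step is correct, because the centralizers of matrices of the special form $aI\oplus Z\oplus bI$ do \emph{not} all coincide. Concretely, take $n=3$, $k=2$, $Z=\begin{pmatrix}2&1\\0&1\end{pmatrix}$, $X=3\oplus Z$, and $Y=2\oplus Z$. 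Both are of the special form, so the converse direction of the lemma would predict $C(X)\not\subsetneq C(Y)$. But a direct computation shows $C(X)\subsetneq C(Y)$: every $W\in C(X)$ lies in $C(Y)$, and the elementary bidiagonal matrix $I_3+E_{21}\in\cS(3)$ commutes with $Y$ (since $Y_{11}=Y_{22}=2$) but not with $X$ (since $X_{11}=3\neq 2=X_{22}$). The underlying phenomenon is that when $a'$ equals the $(1,1)$-entry of the upper-triangular block $Z'$ (an eigenvalue with nonnegative right eigenvector $(1,0)^{\mathsf T}$), the ITN staircase/minor constraints permit a nonzero $(2,1)$-block in $C(Y)$ that is forced to vanish when $a'$ is generic. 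This eigenvalue-coincidence effect is exactly what your ``main obstacle'' paragraph gestures at for the forward direction; it equally destroys the converse, and any correct proof must either restate the lemma to exclude or handle this case or prove Lemma~\ref{pc-4} by a different route.
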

\begin{proof}
	If $X = A_1 \oplus A_2 \oplus A_3$ is not of the said form,
	then either $A_1$ or $A_3$ is not a scalar matrix. Consider
	$Y = aI_{k-1} \oplus A_2 \oplus bI_{n-k-1}$ for some $a, b > 0$. Then $Y\in C(D_k)$ and 
	$C(X)$ is a proper subset of $C(Y)$.
	
	Conversely, suppose $X = aI_{k-1} \oplus Z \oplus bI_{n-k-1}$. Then 
	$C(X) = \cS(k-1) \oplus C(Z) \oplus \cS(n-k-1)$.
	Let $Y = B_1 \oplus W \oplus B_3 \in C(D_k)$ be non-diagonal and $C(X) \subseteq C(Y)$. Then  $B_1=cI_{k-1}, B_3=dI_{n-k-1}$ for some $c,d>0$ and $C(Z) \subseteq C(W)$. Since $Z$ and $W$ are $2 \times 2$ non-scalar  matrices and $Z \in C(W)$, $Z=\alpha I_2+\beta W$ for some $\beta>0$ and $\alpha\in \mathbb{R}$. This implies that $C(Z)=C(W)$. Thus $C(X)$ is not a proper subset of $C(Y)$.
\end{proof}
~~~~~~~~~~~~~~

Using the above lemma, we next show that $T(C_j)=C_j$ or $C_{n-j}$.

\begin{lemma} \label{pc-4}
	Let $n \ge 3$ and  $T:\cS(n)\to \cS(n)$ be a semigroup isomorphism.  Then either
	\[ T(C_j) = C_j \text{ for all } 1 \le j < n, \text{ or } T(C_j) =  C_{n-j} \text{ for all }1 \le j < n.\]
\end{lemma}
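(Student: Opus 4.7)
The plan is to combine Lemma \ref{CDk-01} with the intrinsic characterization of $C_k$ from Lemma \ref{pc-05}, and then exploit Lemma \ref{pc-3} to pin down the induced permutation as an automorphism of the path graph on $[n-1]$.

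First, I would apply Lemma \ref{CDk-01} to obtain a permutation $\sigma$ of $[n-1]$ such that $T(C(D_i)) = C(D_{\sigma(i)})$ for every $i \in [n-1]$. Next, I would upgrade this to the statement $T(C_i) = C_{\sigma(i)}$. The key observation is that Lemma \ref{pc-05} supplies an intrinsic characterization of $C_i$ inside $C(D_i)$: an element $X \in C(D_i)$ lies in $C_i$ if and only if $X$ is non-diagonal and $C(X)$ is not strictly contained in $C(Y)$ for any non-diagonal $Y \in C(D_i)$. This property is preserved by $T$: as a semigroup automorphism of $\cS(n)$, the map $T$ satisfies $T(C(X)) = C(T(X))$ and preserves inclusions; moreover, by Remark \ref{rem1}(iii), $T$ is a bijection on $\cD(n)$, hence sends non-diagonal matrices to non-diagonal matrices. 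Combining these, $T$ carries $C_i$ into $C_{\sigma(i)}$, and the same argument applied to $T^{-1}$ yields equality.

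If $n = 3$ then $[n-1] = \{1,2\}$ admits only the identity and the transposition, which are precisely the two alternatives $\sigma(j) = j$ and $\sigma(j) = n-j$ in the conclusion, so nothing more is needed. For $n \ge 4$, I would invoke Lemma \ref{pc-3}: for $i \ne j$, every pair of matrices in $C_i \times C_j$ commutes if and only if $|i-j| > 1$. Since $T$ preserves commutativity, $\sigma$ must satisfy
\[
|i-j| = 1 \iff |\sigma(i) - \sigma(j)| = 1 \quad \text{for all } i \ne j \text{ in } [n-1].
\]
In other words, $\sigma$ is a graph automorphism of the path $1\text{--}2\text{--}\cdots\text{--}(n-1)$. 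Since the two endpoints are the only vertices of degree $1$, such an automorphism either fixes both endpoints (and then, propagating along the unique neighbor at each step, must be the identity) or swaps them (forcing $\sigma$ to be the reversal $i \mapsto n - i$). These are exactly the two cases in the statement.

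The main obstacle is Step 2, namely cleanly deducing $T(C_i) = C_{\sigma(i)}$ from the characterization in Lemma \ref{pc-05}; one must be careful to verify that both the ``non-diagonal'' condition and the centralizer-containment condition transfer across $T$, but this follows readily from Remark \ref{rem1}(iii) together with the fact that $T$ is an automorphism of $\cS(n)$. Once this identification is in place, the remainder is an elementary graph-automorphism argument.
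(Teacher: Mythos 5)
Your proposal is correct and follows essentially the same route as the paper. You first use Lemma \ref{CDk-01} to get $T(C(D_i)) = C(D_{\sigma(i)})$ for a permutation $\sigma$ of $[n-1]$, then use Lemma \ref{pc-05} (together with the fact that $T$ preserves $\cD(n)$ and centralizers) to promote this to $T(C_i) = C_{\sigma(i)}$, and finally use Lemma \ref{pc-3} to constrain $\sigma$; the paper does exactly this, merely phrasing the last step as an explicit count of how many of the $C_j$ each $C_i$ fully commutes with (so that $C_1$ and $C_{n-1}$ are distinguished as those commuting with exactly $n-3$ of them) followed by a step-by-step propagation, whereas you observe that commutation encodes the path graph on $[n-1]$ and invoke that a path has only the identity and the reversal as automorphisms. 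This graph-theoretic packaging is a clean and equivalent way to say what the paper does by hand.
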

\begin{proof}
 We first claim that for each $j\in[n-1]$ there exists $k\in[n-1]$ such that $T(C_j) = C_k$.  Fix $1\leq j\leq n-1$. By Lemma \ref{CDk-01}, $T(C(D_j))=C(D_k)$ for some $k\in[n-1]$. Since $C_j \subsetneq C(D_j)$, $T(C_j)\subseteq C(D_k)$.  Let $X = aI_{j-1} \oplus Z \oplus bI_{n-j-1}\in C_j$ and claim that $T(X)\in C_k$. If not, then there exists non-diagonal $Y\in C(D_k)$ such that $C(T(X))$ is a proper subset of $C(Y)$. Since $T$ is invertible and $T(\cD(n))=\cD(n)$, $T^{-1}(Y)\in C(D_j)$ is non-diagonal and $C(X)\subsetneq C(T^{-1}(Y))$, a contradiction by Lemma \ref{pc-05}. Thus $T(C_j)\subseteq C_k$. Since $T$ is bijective, it follows using similar reasoning that $T^{-1}(C_k)\subseteq C_j$. Thus  $T(C_j)= C_k$, where $ k\in[n-1]$.

Given the above, the result is trivial for $n=3$. Assume henceforth that $n\geq4$. Since $T$ is bijective, we have $(T(C_1), \dots, T(C_{n-1}))$
is a permutation of $(C_1, \dots, C_{n-1})$. We now claim that $T(C_1)=C_1$ or $T(C_1)=C_{n-1}$. By Lemma \ref{pc-3}, the matrices in $T(C_1)$ commute with all matrices in the $n-3$ sets $T(C_3), \dots, T(C_{n-1})$, and the matrices in $T(C_{n-1})$ commute with all matrices in the $n-3$ sets $T(C_1), \dots, T(C_{n-3})$. Notice that $C_1$ and $C_{n-1}$ are the only two sets whose all elements commute with all matrices from exactly $n-3$ of the sets  $C_1, \dots, C_{n-1}$. Thus $T(C_1)=C_1$ or $C_{n-1}$ and $T(C_{n-1})= C_{n-1}$ or $C_1$.

Assume that $T(C_1)=C_1$. Then $T(C_{n-1})= C_{n-1}$. To complete the proof, it remains to show that $T(C_j)=C_j$ for $2\leq j\leq n-2$. By Lemma \ref{pc-3}, $C_2$ is the unique set among $C_1, \dots, C_{n-1}$ whose elements do not commute with all elements of $C_1$, and similarly $C_{n-2}$ is the unique set whose elements do not commute with all elements of $C_{n-1}$. This implies that not all matrices in  $C_1$ and $T(C_2)$ commute, and not all matrices in  $C_{n-1}$ and $T(C_{n-2})$ commute. Thus $T(C_2)=C_2$ and $T(C_{n-2})=C_{n-2}$. Proceeding similarly, we can show that  $T(C_j)=C_j$ and $T(C_{n-j})= C_{n-j}$ for $3\leq j \leq \lfloor \frac{n}{2}\rfloor$. Thus $T(C_j)=C_j$ for $j \in[n-1]$.

If  $T(C_1)=C_{n-1}$, then  $T(C_{n-1})= C_{1}$ and by an argument  similar to the above we can conclude that $T(C_j)=C_{n-j}$ for all $j\in[n-1]$. This completes the proof.
\end{proof}

Fix $i\in [n-1]$. By Lemma \ref{CDk-01},  we have  $T(C(D_i))=C(D_j)$ for some $j\in [n-1]$. Since $C_i\subset C(D_i)$, by Lemma \ref{pc-4}, $T(C(D_i))=C(D_i)$ or $C(D_{n-i})$.  Using this observation and Lemma \ref{pc-4}, one can conclude the following.

\begin{lemma} \label{pc-4.1}
	Let $n \ge 3$ and  $T:\cS(n)\to \cS(n)$ be a semigroup isomorphism.  Then either
	\[ T(C(D_j)) = C(D_j) \text{ for all } 1 \le j < n, \text{ or } T(C(D_j))=  C(D_{n-j}) \text{ for all }1 \le j < n.\]
\end{lemma}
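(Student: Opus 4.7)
The plan is to combine Lemma \ref{CDk-01} with Lemma \ref{pc-4}, exploiting the containment $C_j \subseteq C(D_j)$. Fix $j \in [n-1]$. By Lemma \ref{CDk-01}, $T(C(D_j)) = C(D_\ell)$ for some $\ell \in [n-1]$, so the goal reduces to pinning down $\ell$ in terms of $j$ from what Lemma \ref{pc-4} already tells us about $T(C_j)$.

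The key observation is a rigidity property: $C_k \subseteq C(D_\ell)$ forces $k = \ell$. Indeed, every element of $C_k$ has the form $aI_{k-1} \oplus X \oplus bI_{n-k-1}$ with $X \in \cS(2) \setminus \cD(2)$, so it carries at least one strictly positive off-diagonal entry, and that entry lies in position $(k,k+1)$ or $(k+1,k)$. If such a matrix also belongs to $C(D_\ell) = \cD(\ell-1) \oplus \cS(2) \oplus \cD(n-\ell-1)$, its only permissible off-diagonal positions are $(\ell,\ell+1)$ and $(\ell+1,\ell)$. Hence $k = \ell$.

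With this rigidity in hand, the result follows quickly. Since $C_j \subseteq C(D_j)$, we have $T(C_j) \subseteq T(C(D_j)) = C(D_\ell)$. Lemma \ref{pc-4} supplies two mutually exclusive global alternatives. If $T(C_j) = C_j$ for all $j$, then $C_j \subseteq C(D_\ell)$, whence the rigidity step gives $\ell = j$ and therefore $T(C(D_j)) = C(D_j)$ for every $j \in [n-1]$. If instead $T(C_j) = C_{n-j}$ for all $j$, then $C_{n-j} \subseteq C(D_\ell)$ forces $\ell = n-j$, and so $T(C(D_j)) = C(D_{n-j})$ for every $j \in [n-1]$.

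The one substantive step is the rigidity lemma of the second paragraph; everything else is a bookkeeping combination with Lemma \ref{pc-4}. The main obstacle I anticipate is simply making precise that an element of $C_k$ has no flexibility in where its off-diagonal mass can sit. In particular, the global uniformity of the final dichotomy (either identity for all $j$ or reversal for all $j$) is inherited directly from Lemma \ref{pc-4} and requires no independent consistency argument.
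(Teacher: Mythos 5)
Your proposal is correct and matches the paper's own (very terse) argument: the paper likewise combines Lemma~\ref{CDk-01} with the containment $C_i \subset C(D_i)$ and Lemma~\ref{pc-4} to pin down the index. The one step you spell out explicitly that the paper leaves implicit is the rigidity observation that $C_k \subseteq C(D_\ell)$ forces $k = \ell$ (because off-diagonal entries of elements of $C_k$ can only sit at $(k,k+1)$ or $(k+1,k)$, while $C(D_\ell) = \cD(\ell-1)\oplus\cS(2)\oplus\cD(n-\ell-1)$ permits off-diagonal entries only at $(\ell,\ell+1)$ and $(\ell+1,\ell)$), and that observation is exactly what is needed and correct.
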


\begin{rem}\label{pcrem5.7}
Let $n \ge 3$ and  $T:\cS(n)\to \cS(n)$ be a semigroup isomorphism. By replacing $T$ with the map $X \mapsto P_n T(X) P_n$ if necessary, we may henceforth assume that
\begin{equation}\label{eqwlogn}
T(C_j)=C_j \quad \text{for all } 1 \le j < n.
\end{equation}
\end{rem}

We now focus on subgroups of $\cD(n)$. As a consequence of the above results, we show that $T$ induces an automorphism on the following subgroups in $\cS(n)$:
\begin{itemize}
	\item $\{aI_{k-1}\oplus D \oplus cI_{n-k-1}: a,c>0 \text{ and } D\in \cD(2)\}$ for $k \in [n-1]$.
	\item $\{aI_{k-1}\oplus bI_2 \oplus cI_{n-k-1}: a,b,c>0\}$ for $k \in [n-1]$.
\end{itemize}
\begin{lemma}\label{pclem3.12}
Let $n\geq 3$ be an integer and let $T:\cS(n) \to \cS(n)$ be an isomorphism satisfying \eqref{eqwlogn}. Define $\hat F_k:=\{aI_{k-1}\oplus D \oplus cI_{n-k-1}: a,c>0 \text{ and } D\in \cD(2)\}$ and $F_k=\{aI_{k-1}\oplus bI_2 \oplus cI_{n-k-1}: a,b,c>0\}$ for $k\in[n-1]$.  Then $T(\hat F_k)=\hat F_k$ and $T(F_k)=F_k$ for all $k\in[n-1]$.
\end{lemma}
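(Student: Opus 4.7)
The plan is to find intrinsic characterizations of $\hat F_k$ and $F_k$ purely in terms of $C_k$ and $\cD(n)$---both of which $T$ already preserves (by Remark \ref{rem1}(iii) and \eqref{eqwlogn})---and then transport these characterizations through $T$.

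For $\hat F_k$ the key claim is
\[\hat F_k = \{D \in \cD(n): DM \in C_k \text{ for all } M \in C_k\}.\]
The inclusion $(\subseteq)$ is a direct block multiplication: if $D = aI_{k-1}\oplus \diag(d_k,d_{k+1})\oplus cI_{n-k-1}$ and $M = a'I_{k-1}\oplus X \oplus c'I_{n-k-1}$ with $X \in \cS(2)\setminus \cD(2)$, then $DM = aa' I_{k-1}\oplus \diag(d_k,d_{k+1})X \oplus cc' I_{n-k-1}$, and left-multiplying $X$ by a positive diagonal matrix preserves both its ITN property and its non-diagonal pattern. For $(\supseteq)$, I would test a general $D = \diag(d_1,\ldots,d_n) \in \cD(n)$ against the single element $M_0 = I_{k-1}\oplus (I_2 + E_{12}) \oplus I_{n-k-1} \in C_k$: one computes $DM_0 = D + d_k E_{k,k+1}$, and for this matrix to lie in $C_k$ its rows outside indices $\{k,k+1\}$, which are of the form $d_i e_i^T$, must share one common value for $i<k$ and another common value for $i>k+1$. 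This forces $d_1 = \cdots = d_{k-1}$ and $d_{k+2} = \cdots = d_n$, hence $D \in \hat F_k$.

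With this characterization, $T(\hat F_k) = \hat F_k$ is immediate: for $D \in \hat F_k$ and any $N \in C_k$, write $N = T(M)$ with $M \in T^{-1}(C_k) = C_k$; then $T(D)N = T(DM) \in T(C_k) = C_k$ and $T(D) \in T(\cD(n)) = \cD(n)$, so $T(D) \in \hat F_k$. The reverse inclusion follows by applying the same argument to $T^{-1}$.

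For $F_k$ I would use the parallel characterization
\[F_k = \{D \in \hat F_k: DM = MD \text{ for all } M \in C_k\},\]
with $(\subseteq)$ again a block calculation, and $(\supseteq)$ via the identity $DM_0 - M_0 D = (d_k - d_{k+1}) E_{k,k+1}$ (valid for $D\in\hat F_k$), which forces $d_k = d_{k+1}$ and so reduces the middle block of $D$ to a scalar $bI_2$. Combining this with $T(C_k) = C_k$ and $T(\hat F_k) = \hat F_k$ then yields $T(F_k) = F_k$. No real obstacle arises; the only content is producing a single test element $M_0 \in C_k$ that simultaneously pins down the shape of $D$ in both characterizations, and $I_{k-1}\oplus (I_2 + E_{12})\oplus I_{n-k-1}$ does exactly that.
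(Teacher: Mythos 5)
Your proof is correct but follows a genuinely different route from the paper's. The paper determines $T(\hat F_k)$ and $T(F_k)$ by exploiting the \emph{whole family} $C_1,\dots,C_{n-1}$: it notes that any $X\in\hat F_k$ commutes with every $\tilde Y_i\in C_i$ for $i\in[n-1]\setminus\{k-1,k,k+1\}$, so the diagonal matrix $T(X)$ must commute with all of $C_i$ for those $i$, forcing $d_i=d_{i+1}$ there and hence $T(X)\in\hat F_k$; for $F_k$ the index $i=k$ is added to the commuting set. You instead produce \emph{intrinsic} characterizations using only $C_k$ and $\cD(n)$: namely $\hat F_k=\{D\in\cD(n):DM\in C_k\ \forall M\in C_k\}$ (verified in both directions, the nontrivial one by testing $M_0 = I+E_{k,k+1}$), and $F_k=\{D\in\hat F_k:DM=MD\ \forall M\in C_k\}$. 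Since $T(\cD(n))=\cD(n)$ and $T(C_k)=C_k$, these closure/commutation conditions transport directly through $T$ and $T^{-1}$, and your block computations for $\subseteq$ and the single test element $M_0$ for $\supseteq$ are all accurate. Your approach has the minor aesthetic advantage of isolating a single semigroup $C_k$ rather than scanning over all $C_i$ (the paper's argument has vacuous-looking index sets when $n$ is small, though the conclusions remain trivially correct there), and it makes the preservation by $T$ a one-line transport rather than an argument about which centralizers commute.
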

\begin{proof}
 Fix $k\in [n-1]$. We first show that $T(\hat F_k)=\hat F_k$. Let $X=aI_{k-1}\oplus D \oplus cI_{n-k-1}\in \hat F_k$, where $D\in \cD(2)$ and $a,b>0$. Since $T(\cD(n))=\cD(n)$,  $T(X)=\diag(d_1,\ldots,d_n)$ with $d_j>0$ for all $j\in [n]$. For a non-diagonal matrix $Y \in \cS(2)$, define $\tilde Y_i:=I_{i-1}\oplus Y \oplus I_{n-i-1}$, where $i\in [n-1]$. By \eqref{eqwlogn},  $T(C_j)= C_j \text{ for all } 1 \le j < n$. Thus $\tilde Y_iT(X)=T(X)\tilde Y_i$ for all $i\in [n-1]\setminus\{k-1,k, k+1\}$ and for all non-diagonal $Y \in \cS(2)$.  This implies that $d_1=\cdots=d_{k-1}$ and  $d_{k+2}=\cdots=d_{n}$. Thus $T(X) \in \hat F_k$ and hence $T(\hat F_k)\subseteq \hat F_k$. Since $T$ is an automorphism, a similar argument shows that $T^{-1}(\hat F_k)\subseteq \hat F_k$. Thus $T(\hat F_k)= \hat F_k$ for $k\in [n-1]$.

To show $T(F_k)=F_k$, let $A=aI_{k-1}\oplus bI_2 \oplus cI_{n-k-1}\in F_k$. Note that $T(A)=\diag(d_1,\ldots,d_n) \in \cD(n)$ and  $\tilde Y_iT(A)=T(A)\tilde Y_i$ for all $i\in [n-1]\setminus\{k-1, k+1\}$ and for all non-diagonal $Y \in \cS(2)$, where the matrices $\tilde Y_i$ are defined as above.  This follows that  $d_1=\cdots=d_{k-1}$, $d_k=d_{k+1}$, and  $d_{k+2}=\cdots=d_{n}$. Thus $T(A) \in F_k$ for all $A\in  F_k$. Similarly, one can show that $T^{-1}(A) \in  F_k$ for all $A\in  F_k$. Hence  $T( F_k)= F_k$ for $k\in [n-1]$.
\end{proof}

The above lemmas yield the following theorem, which is crucial for the induction step.
\begin{theorem}\label{remgen}
	Let $n\geq3$ be an integer and let $T:\cS(n) \to \cS(n)$ be a semigroup isomorphism satisfying \eqref{eqwlogn}.  Then $T$ induces an automorphism on the following monoids:  %As  immediate consequences of  the above three lemmas we conclude the following:
	\begin{enumerate}[(i)]
		\item   $\{\alpha I_{k-1}\oplus X \oplus \beta I_{n-k-1}: X\in \cS(2), \alpha, \beta>0\}$ for all $k\in[n-1]$.
		\item $\cS(n-1)\oplus S(1)=\{X \oplus \alpha : X \in \cS(n-1), \alpha> 0\}$.
	\end{enumerate}
\end{theorem}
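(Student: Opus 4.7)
The plan is to reduce part (i) to a set-theoretic decomposition that matches previously analyzed submonoids, and to reduce part (ii) to the centralizer characterization in Lemma \ref{CDk-0} via a commutation-pattern argument.

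For part (i), set $M_k := \{\alpha I_{k-1} \oplus X \oplus \beta I_{n-k-1} : X \in \cS(2),\ \alpha,\beta > 0\}$. Sorting elements of $M_k$ by whether the central $2 \times 2$ block $X$ is diagonal or not yields a disjoint decomposition
\[M_k = C_k \,\sqcup\, \hat F_k,\]
where $C_k$ is the semigroup defined in \eqref{sgrpC_k} (corresponding to $X \in \cS(2) \setminus \cD(2)$) and $\hat F_k$ is the subgroup from Lemma \ref{pclem3.12} (corresponding to $X \in \cD(2)$). By the standing assumption \eqref{eqwlogn} we have $T(C_k) = C_k$, and by Lemma \ref{pclem3.12} we have $T(\hat F_k) = \hat F_k$. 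Taking the union gives $T(M_k) = M_k$. Since $M_k$ is visibly closed under multiplication and contains $I_n$, $T$ restricts to a monoid automorphism on $M_k$.

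For part (ii), introduce the single diagonal matrix $D_0 := 1 \cdot I_{n-1} \oplus 2 \cdot I_1 \in \cD(n)$. Applying Lemma \ref{CDk-0} with two blocks of sizes $n-1$ and $1$ gives $C(D_0) = \cS(n-1) \oplus \cS(1)$, which is exactly the monoid $N$ in (ii). Since $T$ is a semigroup isomorphism, $T(C(D_0)) = C(T(D_0))$, so it suffices to show that $T(D_0)$ has the form $\alpha I_{n-1} \oplus \beta$ with $\alpha \ne \beta$; a second application of Lemma \ref{CDk-0} will then yield $C(T(D_0)) = N$, hence $T(N) = N$.

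To pin down $T(D_0)$, note first that $T(D_0) \in \cD(n)$ by Remark \ref{rem1}(iii), so write $T(D_0) = \diag(d_1,\ldots,d_n)$. The key observation is that $D_0$ commutes with every element of $C_j$ for $j \in [n-2]$, because the relevant $2 \times 2$ diagonal block of $D_0$ is the scalar $I_2$; but $D_0$ fails to commute with some (indeed, any) element of $C_{n-1}$, because the last $2 \times 2$ diagonal block of $D_0$ is $\diag(1,2)$, which commutes only with diagonal $2 \times 2$ matrices. Since $T(C_j) = C_j$ for every $j$, exactly the same commutation pattern must hold for $T(D_0)$. A diagonal matrix commutes with an element $aI_{j-1}\oplus Z \oplus bI_{n-j-1}$ of $C_j$ (whose off-diagonal entries of $Z$ are nonzero) if and only if its $j$-th and $(j+1)$-th diagonal entries agree. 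Hence $d_j = d_{j+1}$ for $j = 1,\ldots,n-2$ and $d_{n-1} \ne d_n$, yielding $T(D_0) = d_1 I_{n-1} \oplus d_n$ with $d_1 \ne d_n$, as required. The only real content is this commutation-tracking step; everything else is direct bookkeeping with the results already established in Sections 2 and 3 and the earlier parts of Section 5.
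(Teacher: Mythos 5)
Your proof is correct, and for part (ii) it takes a genuinely different and arguably cleaner route than the paper. For part (i), the paper also invokes $T(C_k)=C_k$ and $T(\hat F_k)=\hat F_k$ but then factorizes a general $X\in\cS(2)$ as $LDU$ and applies $T$ to each factor; your observation that $M_k$ is the \emph{disjoint union} $C_k \sqcup \hat F_k$ (split by whether the central block is diagonal) makes the factorization step unnecessary, since an isomorphism carries a disjoint union of invariant sets to itself. For part (ii), the paper proves the claim as a consequence of part (i): it takes $\hat X \in \cS(n-1)$, expands it by the Whitney bidiagonal factorization \eqref{bifact2}, and applies $T$ factor by factor using part (i) and $T(\cD(n))=\cD(n)$ to conclude that $T(\hat X \oplus 1) \in \cS(n-1)\oplus\cS(1)$. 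You instead realize $\cS(n-1)\oplus\cS(1)$ as the centralizer $C(D_0)$ of a single diagonal matrix $D_0 = I_{n-1}\oplus 2$ via Lemma \ref{CDk-0}, and then use the commutation pattern of $D_0$ against the family $C_1,\dots,C_{n-1}$ — preserved under $T$ by \eqref{eqwlogn} — to force $T(D_0)$ to have the same block-scalar shape, whence $C(T(D_0)) = \cS(n-1)\oplus\cS(1)$ by a second application of Lemma \ref{CDk-0}. This bypasses the Whitney factorization entirely and makes part (ii) independent of part (i); the cost is nothing beyond what was already established in Sections 3 and 5. Both routes are valid, and yours is slightly more self-contained structurally.
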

\begin{proof}
Fix $k\in[n-1]$ and claim that $T(\{\alpha I_{k-1}\oplus X \oplus \beta I_{n-k-1}: X\in \cS(2), \alpha, \beta>0\})=\{\alpha I_{k-1}\oplus X \oplus \beta I_{n-k-1}: X\in \cS(2), \alpha, \beta>0\}$. By \eqref{eqwlogn}, $T(C_{k})=C_{k}$  and by Lemma \ref{pclem3.12}, $T(\hat F_{k})=\hat F_{k}$, where $\hat F_{k}=\{aI_{k-1}\oplus D\oplus bI_{n-k-1}: a,b>0 \text{ and } D\in \cD(2)\}$. Note that every $X\in \cS(2)$ can be factorize as $X=LDU$, where $L \in \cL_0(2),~D\in \cD(2)$ and $U\in \cU_0(2)$. Using the facts that $T$ is multiplicative and bijective, we conclude that $T(\{\alpha I_{k-1}\oplus X \oplus \beta I_{n-k-1}: X\in \cS(2), \alpha, \beta>0\})=\{\alpha I_{k-1}\oplus X \oplus \beta I_{n-k-1}: X\in \cS(2), \alpha, \beta>0\}$. 

 We next claim that  $T$  induces an automorphism on $\cS(n-1)\oplus S(1)$. Let $Z=X\oplus a$, where $X \in \cS(n-1), a> 0$. Let $\hat X=\frac{1}{a}X$. Then $Z=a(\hat X\oplus 1)$ and $T(Z)=\gamma(a)T(\hat X\oplus 1)$ by Remark \ref{rem1}. Since $\hat X\in \cS(n-1)$, by Whitney bidiagonal factorization, 

	\begin{equation}\label{bifact2}
	\hat X=\prod_{j=1}^{n-2} \prod_{k=n-2}^{j} \big{(} I_{n-1}+w_{j,k}E_{k+1,k} \big{)}~D \prod_{j=n-2}^{1} \prod_{k=n-2}^{j} \big{(} I_{n-1}+ w'_{j,k+1}E_{k,k+1} \big{)},
\end{equation}
where  $w_{j,k}\geq 0$ for $1\leq j\leq k \leq n-2,  w'_{j,k+1}\geq 0$ for $1\leq j\leq k\leq n-2$, and $D\in \cD(n-1)$. By Remark \ref{rem1}, $T(\cD{(n)})=\cD(n)$ and by the first part, $T$ maps $\{\alpha I_{k-1}\oplus X \oplus \beta I_{n-k-1}: X\in \cS(2), \alpha, \beta>0\}$ onto itself for all $k\in[n-1]$. Thus, using \eqref{bifact2} and the multiplicativity of $T$, we have $T(\hat X\oplus 1)\in \cS(n-1)\oplus \cS(1)$. Thus $T(Z) \in \cS(n-1)\oplus \cS(1)$, and hence  $T(\cS(n-1)\oplus \cS(1)) \subseteq \cS(n-1)\oplus \cS(1)$. Similarly, one can show that $T^{-1}(\cS(n-1)\oplus \cS(1)) \subseteq \cS(n-1)\oplus \cS(1)$. Thus $T(\cS(n-1)\oplus \cS(1)) = \cS(n-1)\oplus \cS(1)$.
\end{proof}

With these above ingredients in hand, we now complete the proof of Theorem \ref{multiplicative} for $n\geq 3$ using the induction hypothesis. Recall that the base case $n=2$ was proved in the preceding section. 
\begin{proof}[Proof of Theorem \ref{multiplicative} $(i)\implies (ii)$ for the General Case]
For the induction step, let $T:\cS(n) \to \cS(n)$ with $n\geq 3$ be an isomorphism. Replacing $T$ by the map $X \mapsto P_n T(X) P_n$ if necessary, we may assume that $T$ satisfies \eqref{eqwlogn}. By Theorem \ref{remgen}, $T$ induces an automorphism on the  monoid $\cS(n-1) \oplus \cS(1)$. Define $T_1: \cS(n-1) \rightarrow \cS(n-1)$ by
	\begin{equation}\label{pnceq001}
T_1(X) := \frac{1}{f(X)}Z,\quad \text{where} ~~T(X\oplus [1]):= Z \oplus [f(X)]. \end{equation}
	Note that $f:\cS(n-1)\to (0,\infty)$ is a multiplicative map. We now show that $T_1$ is a semigroup automorphsim on $\cS(n-1)$:
	\begin{itemize}
		\item \textbf{Multiplicative:} Let $X_1,X_2 \in  \cS(n-1)$ and let $T(X_1\oplus [1])= Z_1 \oplus [f({X_1})], T(X_2\oplus [1])= Z_2 \oplus [f({X_2})]$. Then $T_1(X_1)=\frac{1}{f({X_1})}Z_1$ and $T_1(X_2)=\frac{1}{f({X_2})}Z_2$. Since $T$ is multiplicative on $\cS (n)$, 
		\[T_1(X_1 X_2)= \frac{1}{f({X_1})f({X_2})}Z_1 Z_2= T_1(X_1) T_1 (X_2).\]
		
		\item \textbf{Injective:} Let $X_1,X_2 \in  \cS(n-1)$ such that $T_1(X_1)=T_1(X_2)$. If $T(X_1\oplus [1])= Z_1 \oplus [f({X_1})]$ and $ T(X_2\oplus [1])= Z_2 \oplus [f({X_2})]$, then $\frac{1}{f({X_1})}Z_1=\frac{1}{f({X_2})}Z_2$. Thus
		\begin{eqnarray*} %\label{oneonegeneq2}
			T(X_2\oplus [1])&=&\frac{f({X_2})}{f({X_1})}Z_1 \oplus [f({X_2})] \\\nonumber
			%&=& \frac{f({X_2})}{f({X_1})}(Z_1 \oplus [f({X_1})])\\ \nonumber
			&=& \frac{f({X_2})}{f({X_1})}T(X_1 \oplus [1])\\ \nonumber
			&=&T\left(\gamma^{-1}\left(\frac{f({X_2})}{f({X_1})}\right)(X_1 \oplus [1])\right),
		\end{eqnarray*}
 by Remark \ref{rem1} (ii). Since $T$ is bijective,  $\gamma^{-1}\left(\frac{f({X_2})}{f({X_1})}\right)=1$, and so $X_1=X_2$.
		
		\item \textbf{Surjective:} Let $Y\in \cS(n-1)$. By Theorem \ref{remgen}, there exists $X_1\in \cS(n-1)$ and a positive real number $\alpha$ such that $T(X_1\oplus [\alpha])= Y \oplus [1]$. %Now $T(\frac{1}{\alpha}I))= \beta I$ for some $\beta>0$.
		 Let $X:=\frac{1}{\alpha}X_1$ and $Z:=\gamma\left(\frac{1}{\alpha}\right) Y$. By Remark \ref{rem1} $(ii)$,
		\[T (X\oplus[1]) = T(\frac{1}{\alpha} (X_1\oplus [\alpha]))
		= \gamma\left(\frac{1}{\alpha}\right)  (Y\oplus[1])
		= Z\oplus \left[\gamma\left(\frac{1}{\alpha}\right) \right],\]
and $f(X)=\gamma\left(\frac{1}{\alpha}\right) $. Thus there exists $X \in \cS(n-1)$ such that $T_1(X)=\frac{1}{f(X)}Z=Y$.
	\end{itemize}
This shows that $T_1:\cS(n-1)\to \cS(n-1)$ is an isomorphism.	By the induction hypothesis,
	\[T_1(X) = \gamma_{n-1} (\det X) (\det X)^{-\frac{1}{n-1}} RXR^{-1} \quad \text{for all } X\in \cS(n-1),\] where $\gamma_{n-1}:(0,\infty) \to (0,\infty)$ is a bijective multiplicative map and  $R = \sum\limits_{j=1}^{n-1} r_j E_{jj}$ or $\sum\limits_{j=1}^{n-1} r_jE_{j,n-j}\in \mathbb{R}^{{n-1}\times {n-1}}$ with $r_1,\ldots,r_{n-1}>0$. Hence, by \eqref{pnceq001}, we obtain
	\begin{equation}\label{indeq1}
	T (X\oplus[1])=f(X)\left(T_1(X)\oplus [1]\right)=f(X)\left( \gamma_{n-1} (\det X) (\det X)^{-\frac{1}{n-1}} RXR^{-1} \oplus [1]\right).\end{equation}
We now show that the case $R=\sum\limits_{j=1}^{n-1} r_jE_{j,n-j}$ is not possible.  Let $D_{\alpha}=\alpha I_{n-2}\oplus I_2$ with $\alpha \neq 1$. Suppose that $R=\sum\limits_{j=1}^{n-1} r_jE_{j,n-j}$. Then, by \eqref{indeq1}, \[ T (D_{\alpha})= f(\alpha I_{n-2}\oplus [1])\left( \gamma_{n-1}({\alpha}^{n-2})~ \alpha^{-\frac{n-2}{n-1}} \diag(1,\alpha,\ldots,\alpha)\oplus [1]\right),\] a contradiction, since Lemma~\ref{pclem3.12} implies that $ T(D_{\alpha})=aI_{n-2}\oplus bI_2$ for some $a,b>0$. Thus $R = \sum\limits_{j=1}^{n-1} r_j E_{jj}$. Let $\tilde R = R \oplus [1]$, and replace $T$ by the conjugated map $A \mapsto \tilde R^{-1}  T(A) \tilde R$. We may therefore assume that
 \begin{equation}\label{pcmaineq00}
 T(X \oplus [1]) = f(X)\left(\gamma_{n-1} (\det X) (\det X)^{-\frac{1}{n-1}} X \oplus [1]\right).
 \end{equation}
We next show that $T(A\oplus[1])= A\oplus [1]$ for all $A\in \cS_0(n-1)$.  Let $A\in \cS_0(n-1)$. Since $\gamma_{n-1}(1)=1$, by \eqref{pcmaineq00},  $T(A \oplus [1]) = f(A)\left(A \oplus [1]\right)$ for all $A\in \cS_0(n-1)$.  We claim that $f(A)=1$ for all $A \in \cS_0(n-1)$. By Whitney bidiagonal factorization it suffices to consider $A$ of the form 
	\begin{equation}\label{pceqn4.10}
	I_{k-1}\oplus X\oplus I_{n-k-1},
	\end{equation}
	with $X$ in the one-parameter Chevalley subgroups
	\[
	\{ \exp(a E_{12}) = \begin{pmatrix} 1 & a \\ 0 & 1 \end{pmatrix} \, : \, a > 0 \} \qquad \text{or} \qquad
	\{ \exp(a E_{21}) = \begin{pmatrix} 1 & 0 \\ a & 1 \end{pmatrix} \, : \, a > 0  \}
	\]
	or $X$ a torus element $\begin{pmatrix} a & 0 \\ 0 & 1/a \end{pmatrix}$ for $a>0$ -- and with $k \in [n-2]$.
%	 with $X=\begin{pmatrix}
%	1 & a\\ 0 & 1
%	\end{pmatrix}, \begin{pmatrix}
%	1 & 0\\ a & 1
%	\end{pmatrix}, \begin{pmatrix}
%	a & 0\\ 0 & \frac{1}{a}
%	\end{pmatrix}$ and $a>0$ for all $k\in[n-2]$. 
 We next show that $f(A)=1$ for all $X$ as above.  For all these $X$, $f$ is a  function of  $a$ only. Thus we rewrite it as $f_u(a)$ for $X=\begin{pmatrix}
	1 & a\\ 0 & 1
	\end{pmatrix}$,  $f_l(a)$ for $X=\begin{pmatrix}
	1 & 0\\ a & 1
	\end{pmatrix}$, and $f_d(a)$ for $X=\begin{pmatrix}
	a & 0\\ 0 & \frac{1}{a}
	\end{pmatrix}$. It is easy to verify that $f_d:(0, \infty)\to (0,\infty)$ is mutiplicative i.e. $f_d(a_1 a_2)=f_d(a_1)f_d(a_2)$ for all $a_1,a_2>0$. We next show that  $f_u(a_1 + a_2)=f_u(a_1)f_u(a_2)$ for all $a_1,a_2>0$. Let $a_1, a_2>0$ and take $X_1= \begin{pmatrix}
	1 & a_1\\ 0 & 1
	\end{pmatrix}, X_2=\begin{pmatrix}
	1 & a_2\\ 0 & 1
	\end{pmatrix}$, and $X_3=\begin{pmatrix}
	1 & a_1+a_2\\ 0 & 1
	\end{pmatrix}$. Then $X_1 X_2=X_3$ and \[ T(I_{k-1}\oplus X_3\oplus I_{n-k-1})  =T\left( (I_{k-1}\oplus X_1\oplus I_{n-k-1}) (I_{k-1}\oplus X_2\oplus I_{n-k-1})\right),\]
where $k\in [n-2]$. Since $T(A \oplus [1]) = f(A)\left(A \oplus [1]\right)$ for all $A\in \cS_0(n-1)$, we have  \begin{equation}\label{pceq4.11}
f_u(a_1 + a_2)=f_u(a_1)f_u(a_2).
\end{equation}
 Similarly, we can show that $f_l(a_1 + a_2)=f_l(a_1)f_l(a_2)$ for all $a_1, a_2>0$. We now show that $f_u(a)=1$ for all $a>0$.  Let $a>0$. Note that for all $p>0$,
	
	\[\begin{pmatrix}
		p & 0\\ 0 & \frac{1}{p}
	\end{pmatrix} \begin{pmatrix}
	1 & a\\ 0 & 1
	\end{pmatrix} \begin{pmatrix}
	\frac{1}{p} & 0\\ 0 & p
	\end{pmatrix}=\begin{pmatrix}
	1 & p^2a\\ 0 & 1
	\end{pmatrix}.\]
By embedding all the above matrices into $n \times n$ matrices of the form \eqref{pceqn4.10} and applying $T$, we have \begin{equation}\label{pceq4.12}
f_d(p)f_u(a)f_d\left(\frac{1}{p}\right)=f_u(p^2a).
\end{equation} Since $f_d$ is multiplicative, $f_u(a)=f_u(p^2a)$ for all $p>0$. In particular for $p=2$,  $f_u(a)=f_u(4a)$.  Using the identity \eqref{pceq4.11}, we have
\begin{eqnarray*}
	f_u(a)&=&f_u(4a)\\ &=& f_u(2a)f_u(2a)\\
	% &=& \left( f_u(a) f_u(a) \right)^2\\
	 &=& (f_u(a))^4.
\end{eqnarray*}
Thus $f_u(a)=1$ for all $a>0$. Similarly, one can show that $f_l(a)=1$ for all $a>0$. Finally we show that $f_d(p)=1$ for all $p>0$. Let $a > 0$ and $X := \begin{pmatrix}
1 & a\\ 0 & 1
\end{pmatrix}$. Then 
$XX^T = Y^TZY$, where $Y  = \begin{pmatrix}
	1 & \frac{a}{a^2+1}\\ 0 & 1
\end{pmatrix}$ and $Z =\begin{pmatrix}
a^2+1 & 0\\ 0 & \frac{1}{a^2+1}
\end{pmatrix}$. Define 
\[\hat X:=\begin{pmatrix}
	I_{k-1} & 0& 0\\ 0 & X & 0\\0 & 0& I_{n-k-1}
\end{pmatrix}, ~\hat Y:=\begin{pmatrix}
I_{k-1} & 0& 0\\ 0 & Y & 0\\0 & 0& I_{n-k-1}
\end{pmatrix},~ \hat Z:=\begin{pmatrix}
I_{k-1} & 0& 0\\ 0 & Z & 0\\0 & 0& I_{n-k-1}
\end{pmatrix},\]
where $k\in [n-2]$. Since $f_u(p)=f_l(p)=1$ for all $p>0$, we have $T(\hat X) = \hat X$, $T(\hat X^T) = \hat X^T$, $T(\hat Y) = \hat Y$, and $T(\hat Y^T) = \hat Y^T$. This implies that
\[\hat X {\hat X}^T = T(\hat X {\hat X}^T) = T({\hat Y}^T\hat Z\hat Y) = T({\hat Y}^T)T(\hat Z) T(\hat Y) = {\hat Y}^T f_d(a^2+1)\hat Z \hat Y = f_d(a^2+1) \hat X{\hat X}^T.\]
Thus $f_d(a^2+1) = 1$ for all $a> 0$. Since $f_d$ is multiplicative, $f_d(a) = 1$ for all $a>0$. Hence $f(A)=1$ for all $A \in S_0(n-1)$ and
\begin{equation}\label{pcmaineq1}
T(A\oplus [1] )=A \oplus [1]\quad  \text{for all}\quad A \in S_0(n-1). 
\end{equation}
	
Next, we turn our attention to matrices of the form $I_{n-2}\oplus A \in \cS(n)$.  Notice that $T$ induces a semigroup automorphism on
	$\{aI_{n-2} \oplus X: a>0, X \in \cS(2)\}$ by Theorem \ref{remgen}.
	Define $ T_2: \cS(2) \rightarrow \cS(2)$ by 
	\[T_2(X): = \frac{1}{g(X)} Z,\quad  \text{where } T (I_{n-2} \oplus X) := g(X)I_{n-2} \oplus Z,\] 
	where $g:\cS(2)\to (0,\infty)$ is a multiplicative map. We now show that $ T_2$ is a semigroup automorphsim on $\cS(2)$. Using a similar argument as above, it follows that $ T_2$ is multiplicative. It remains to show that $ T_2$ is bijective.
	\begin{itemize}
		\item \textbf{Injective:} Let $X_1,X_2 \in  \cS(2)$ such that $ T_2(X_1)= T_2(X_2)$. If $T(I_{n-2} \oplus X_1)= g(X_1)I_{n-2} \oplus Z_1$ and $ T(I_{n-2} \oplus X_2)= g(X_2)I_{n-2} \oplus Z_2$, then $\frac{1}{g(X_1)}Z_1=\frac{1}{g(X_2)}Z_2$. Thus
		\begin{eqnarray}
			 T(I_{n-2} \oplus X_2)&=& g(X_2) I_{n-2}\oplus \left(\frac{g(X_2)}{g(X_1)}\right)Z_1 \nonumber\\
			%&=& \frac{b_2}{b_1}\left(b_1I_{n-2}\oplus Z_1\right) \nonumber\\
			&=& \frac{g(X_2)}{g(X_1)} T(I_{n-2} \oplus X_1) \nonumber\\
			&=& T\left(\gamma^{-1}\left(\frac{g(X_2)}{g(X_1)}\right)(I_{n-2} \oplus X_1)\right), \nonumber
		\end{eqnarray}
	by Remark \ref{rem1} (ii). This implies $\gamma^{-1}\left(\frac{g(X_2)}{g(X_1)}\right)=1$ and $X_1=X_2$.
		
		\item \textbf{Surjective:} Let $Y\in \cS(2)$. Since $T$ is bijective on
		$\{aI_{n-2} \oplus X: a>0, X \in \cS(2)\}$,  there exists $X_1\in \cS(2)$ and a positive real number $\alpha$ such that $T(\alpha I_{n-2} \oplus X_1)= I_{n-2} \oplus Y$. Since $ T(\frac{1}{\alpha}I_n)= \beta I_n$ for some $\beta>0$, define $X:=\frac{1}{\alpha}X_1$ and $Z:=\beta Y$. Then 
		\begin{eqnarray*}
			 T(I_{n-2} \oplus X) =T\left(\frac{1}{\alpha} (\alpha I_{n-2}\oplus X_1)\right)= \beta I_{n-2} \oplus Z.
			%&=&  \beta (I_{n-2} \oplus Y) \\
			%&=& \beta I_{n-2} \oplus Z.
		\end{eqnarray*}
		Thus there exists $X \in \cS(2)$ such that $T_2(X)=\frac{1}{\beta}Z=Y$.
	\end{itemize}
	This implies that $T_2$ is an automorphism on $\cS(2)$. By the induction hypothesis, $T_2$ has the form
	$X \mapsto  \gamma_{2} (\det X) (\det X)^{-\frac{1}{2}}  SXS^{-1}$, where $\gamma_2:(0,\infty) \to (0,\infty)$ is a bijective multiplicative map and $S =\begin{pmatrix}
		s_1 & 0\\ 0 & s_2
	\end{pmatrix}$ or $\begin{pmatrix}
0 & 	s_1\\ 	s_2 & 0
	\end{pmatrix}$ with $s_1,s_2>0$. Thus for all $X\in \cS(2)$,
	 \begin{equation}\label{maineq3}
	  T(I_{n-2} \oplus X)=g(X)\left(I_{n-2} \oplus \gamma_{2} (\det X)(\det X)^{-\frac{1}{2}} SXS^{-1}\right).
	  \end{equation}
We next show that $S=\begin{pmatrix}
	0 & 	s_1\\ 	s_2 & 0
\end{pmatrix}$ is infeasible. Let \[A=I_{n-3}\oplus \begin{pmatrix}
		1 & 0 & 0\\1 & 1 & 0\\0 & 0& 1
	\end{pmatrix}\quad \text{and}\quad B= I_{n-3}\oplus\begin{pmatrix}
	1 & 0 & 0\\0 & 1 & \alpha \\0 & 0& 1
	\end{pmatrix},\] with $\alpha\neq 0$. Then $AB=BA$ and by \eqref{pcmaineq1}, $ T(A)=A$. If $S = \begin{pmatrix}
	 	0 & s_1\\  s_2 & 0
	 \end{pmatrix}$ then using \eqref{maineq3} we have $T(B)=g\left(\begin{pmatrix}
	 1 & \alpha \\ 0 &1
	 \end{pmatrix}\right)\left(I_{n-3}\oplus\begin{pmatrix}
	 1 & 0 & 0\\0 & 1 & 0 \\0 & \frac{s_2}{s_1}\alpha & 1
	 \end{pmatrix}\right)$, and so $T(A)  T(B)\neq  T(B)  T(A)$, a contradiction. Thus $S =\begin{pmatrix}
	 	 s_1 & 0\\ 0 & s_2
	 \end{pmatrix}$ for some $s_1, s_2>0$. Let $S^\prime:=\diag (1,\frac{s_2}{s_1})$. Then $SXS^{-1}=S^\prime X {S^\prime}^{-1}$ for all $X \in \mathbb{R}^{2\times 2}$.  Define $\tilde S := \begin{pmatrix}
	 I_{n-2} & 0\\ 0 & S^\prime
	 \end{pmatrix}\in \cD(n)$. By replacing $ T$ with the map $A \mapsto \tilde S^{-1}  T(A) \tilde S$, we may  assume that \begin{equation}\label{pceq4.13}
	 T(I_{n-2} \oplus X)= g(X)\left( I_{n-2}\oplus \gamma_2 (\det X) (\det X)^{-\frac{1}{2}} X\right)\quad \text{for all } X \in \cS(2).
	  \end{equation}
We now show that $T(I_{n-2} \oplus X)= I_{n-2} \oplus X$ for all $X \in \cS_0(2)$. By a similar argument as above, one can show that $g(X)=1$ for $X=\begin{pmatrix}
		1 & \alpha \\ 0 & 1
	\end{pmatrix}, \begin{pmatrix}
		1 & 0 \\ \alpha & 1
	\end{pmatrix}$ and $\begin{pmatrix}
		\alpha & 0 \\ 0 & \frac{1}{\alpha}
	\end{pmatrix}$ for all $\alpha>0$. Again by Whitney's bidiagonal factorization, $g(X)=1$ for all $X\in \cS_0(2)$. Since $\gamma_2(1)=1$, by \eqref{pceq4.13}, we have
	
	\begin{equation}\label{pcmaineq2}
		T(I_{n-2} \oplus X)=  I_{n-2}\oplus  X\quad \text{for all}\quad X \in \cS_0(2).
		 \end{equation}
Thus far, we have shown that $T$ fixes every upper and lower elementary bidiagonal matrix. We now prove that $T(D)=D$ for all $D\in \cD_0(n)$. Let $D = \diag(a_1,\ldots,a_n)\in \cD_0(n)$. Then $\prod\limits_{i=1}^{n} a
_i= 1$ and $D = D_1 D_2$ with $D_1 = \diag\left(a_1,\ldots,a_{n-2},\frac{1}{\prod\limits_{i=1}^{n-2}a_i},1\right)$ and $D_2 = I_{n-2}\oplus \diag(\frac{1}{a_n}, a_n)$. By \eqref{pcmaineq1} and \eqref{pcmaineq2}, we have $ T(D)=T(D_1)T(D_2)=D_1D_2=D$. Thus 
	
		\begin{equation}\label{pcmaineq3}
		T(D)=  D  \quad \text{for all }\quad D \in \cD_0(n).
	\end{equation}
Now note that all the bidiagonal factors of an $n\times n$ invertible TN matrix, with determinant $1$, are either of the form $I_{k-1}\oplus X \oplus I_{n-k-1}$ with $\det(X)=1$, or a diagonal matrix with determinant $1$. Thus  $T(A)=A$ for all $A \in \cS_0(A)$. %This concludes the proof.

To  summarize the above analysis, we have used the maps $X\mapsto P_nT(A)P_n$ (see Remark \ref{pcrem5.7}), $X \mapsto \tilde R^{-1}  T(X) \tilde R$ and $X \mapsto \tilde S^{-1}  T(X) \tilde S$ to conclude that $T(A)=A$ for all $A \in \cS_0(A)$, where

\[P_n:=\begin{pmatrix}
	0 & \cdots & 1 \\
	\vdots & \iddots & \vdots \\
	1 & \cdots & 0 \\
\end{pmatrix}, \quad \tilde R=\begin{pmatrix}
R & 0\\ 0 & 1
\end{pmatrix}, \quad \tilde S=\begin{pmatrix}
I_{n-1} & 0\\ 0 & s
\end{pmatrix} \text{ with } R\in \cD(n-1) \text{ and } s>0. \]
	
To conclude the proof, write a general $A \in \cS(n)$ as $A = ((\det A)^{\frac{1}{n}} I_n)A_0$ with $\det A_0 = 1$. Thus
	\begin{eqnarray*}
		T(A) &=& T((\det A)^{\frac{1}{n}} A_0)\\
		&=& \gamma((\det A)^{\frac{1}{n}})  T(A_0)\\
		&=& \gamma((\det A)^{\frac{1}{n}})  A_0\\
		&=&\mu(\det A)(\det A)^{-\frac{1}{n}} A ,
	\end{eqnarray*}
where $\mu(x):=\sqrt[n]{\gamma(x)}$ is a bijective multiplicative map on $(0,\infty)$.  Hence 
\[T(A)=\mu(\det A)(\det A)^{-\frac{1}{n}} A \text{ for all } A\in \cS(n).\]
 This completes the proof.
\end{proof}

Using Theorem \ref{multiplicative}, we now prove our second main result.
 
 \begin{proof}[Proof of Theorem \ref{multiplicative2}]
That $(ii)\implies (i)$ is trivial by hypothesis. For $(i)\implies (ii)$,  using Proposition \ref{prop2},  $T$ can be extended to a monoid automorphism $\hat T$ of $ITN(n)$ such that $\hat T \equiv T$ on $G(n)$. By Theorem \ref{multiplicative}, we have the desired structure of $\hat T$ and hence of $T$.
 \end{proof}

We conclude with the following remark.
\begin{rem}
  Let $G$ and $H$ be two groups. If $\phi$ is an automorphism of $G$ and $\psi$ is an automorphism of $H$, then the product function $\phi \times \psi: G \times H \to G \times H$ defined by
\[(\phi \times \psi )(g, h) := (\phi (g), \psi(h))\quad \text{for all } g\in G \text{ and } h\in H,\]
is an automorphism of $G \times H$. But it is not true in general that every automorphism of $G \times H$ has the above form. 

Now we see that  every automorphism of the direct product of the group $Z^+(n)=\{aI_n: a>0\}$ and submonoid $\cS_0(n)$ has the above factorization.  Note that $\cS(n)= Z^+(n) \times  \cS_0(n)$. By Theorem \ref{multiplicative}, if $T$ is an automorphism of $\cS(n)$ then $T_1=T |_{Z^+(n)}$ and $T_2=T|_{\cS_0(n)}$ are automorphisms on $Z^+(n)$ and $\cS_0(n)$, respectively. Thus $T=T_1\times T_2$. 
\end{rem}

\section*{Acknowledgements}
	We thank Apoorva Khare for a detailed reading of an earlier draft and for providing valuable feedback. This project was started while the authors were attending the workshop “Theory and applications of total positivity", held at the
American Institute of Mathematics (AIM), from July 24-28, 2023. The authors thank AIM and the NSF for their financial support of AIM and the participants in this workshop.  P.N. Choudhury was partially supported by supported by ANRF Prime Minister Early Career Research Grant ANRF/ECRG/2024/002674/PMS (ANRF, Govt.~of India), INSPIRE Faculty Fellowship research grant DST/INSPIRE/04/2021/002620
(DST, Govt.~of India), and IIT Gandhinagar Internal Project: IP/IP/50025. S.M. Fallat was supported in part by an NSERC Discovery Research Grant 2025-05272, Application No.: RGPIN-2019-03934.

\end{document}